\documentclass[a4paper,11pt]{amsart}
\usepackage[utf8]{inputenc}
\usepackage[T1]{fontenc}
\usepackage[english]{babel}

\usepackage{microtype}
\usepackage{amsmath,amssymb,amsthm,amscd,amsfonts,mathtools}
\usepackage{stmaryrd}
\usepackage{fancyhdr}
\usepackage{hyperref}
\usepackage{mathtools}
\usepackage{a4wide}
\usepackage{tikz-cd}
\usepackage{todonotes}

\pagestyle{fancy}
\setlength{\headheight}{13pt}
\fancyhead[LE,CE,RE,LO,CO,RO]{}
\fancyhead[LE,RO]{\thepage}
\fancyhead[CE]{\tiny\scshape{C. Armana}}
\fancyhead[CO]{\tiny\scshape{Non-perfect pairings between Hecke algebra and modular forms over function fields}}
\fancyfoot[LE,CE,RE,LO,CO,RO]{}

\makeatletter
\@namedef{subjclassname@2020}{\textup{2020} Mathematics Subject Classification}
\makeatother

\theoremstyle{definition}
\newtheorem{exem}{Example}[section]
\newtheorem{rema}[exem]{Remark}
\newtheorem{defi}[exem]{Definition}

\theoremstyle{plain}
\newtheorem{theo}[exem]{Theorem}
\newtheorem{prop}[exem]{Proposition}
\newtheorem{coro}[exem]{Corollary}
\newtheorem{lemm}[exem]{Lemma}
\newtheorem{conj}[exem]{Conjecture}
\newtheorem{claim}[exem]{Claim}



\newcommand{\Z}{\mathbb{Z}}
\newcommand{\Q}{\mathbb{Q}}

\newcommand{\R}{\mathbb{R}}
\newcommand{\C}{\mathbb{C}}
\newcommand{\Oinf}{\mathcal{O}_{\infty}}
\newcommand{\Kinf}{K_{\infty}}
\newcommand{\Cinf}{\C_{\infty}}
\DeclareMathOperator{\rk}{\mathrm{rk}}
\DeclareMathOperator{\tr}{\mathrm{tr}}

\DeclareMathOperator{\Hom}{\mathrm{Hom}}
\newcommand{\GL}{\mathrm{GL}}
\newcommand{\PGL}{\mathrm{PGL}}

\newcommand{\Fp}{\mathbb{F}_p}
\newcommand{\Fq}{\mathbb{F}_q}
\DeclareMathOperator{\End}{\mathrm{End}}

\newcommand{\pp}{\mathfrak{p}}
\newcommand{\qq}{\mathfrak{q}}
\newcommand{\nn}{\mathfrak{n}}
\newcommand{\mm}{\mathfrak{m}}
\newcommand{\picarlitz}{\widetilde{\pi}}
\newcommand{\et}{^{\times}}
\newcommand{\bs}{\backslash}
\newcommand{\TT}{\mathbb{T}} 
\newcommand{\BT}{\mathcal{T}} 
\newcommand{\Iwa}{\mathcal{I}_{\infty}} 
\newcommand{\res}{\mathrm{res}} 
\newcommand{\PP}{\mathbb{P}} 
\newcommand{\Div}{\mathrm{Div}} 
\newcommand{\SM}{\mathbb{M}} 
\newcommand{\cusps}{\mathrm{cusps}} 
\newcommand{\tors}{_{\mathrm{tors}}} 
\newcommand{\DMF}{\mathrm{DMF}}
\newcommand{\HC}{\mathrm{HC}}

\newcommand{\pairing}{\varphi}
\newcommand{\Harm}{\mathcal{H}} 
\newcommand{\somme}{\theta} 

\newcommand{\grandmid}{\,\middle\vert\, }
\newcommand{\smallpmatrix}[4]{\left( \begin{smallmatrix}  #1&#2\\ #3&#4 \end{smallmatrix} \right)}  

\begin{document}
	
\title{Non-perfect pairings between Hecke algebra and modular forms over function fields}
\author{Cécile Armana}

\address{Université de Franche-Comté, CNRS, LmB (UMR 6623), F-25000 Besançon, France}
\email{cecile.armana@univ-fcomte.fr}

\thanks{}

\begin{abstract}
We study two analogs, for modular forms over $\Fq(T)$, of the pairing between Hecke algebra and cusp forms given by the first coefficient in the expansion. For Drinfeld modular forms, the $\Cinf$-pairing is provided by the first coefficient of their $t$-expansion at infinity. For $\Z$-valued harmonic cochains, the $\Z$-pairing is given by their Fourier coefficient with respect to the trivial ideal. We prove that, contrarily to classical cusp forms, both pairings in weight~$2$ are not perfect in a quite general setting, namely for the congruence subgroup $\Gamma_0(\nn)$ with any prime ideal $\nn$ in $\Fq[T]$ of degree~$\geq 5$. We show it by exhibiting a common element of the Hecke algebra in the kernels of both pairings and proving that it is non-zero using computations with modular symbols over $\Fq(T)$. Finally we present computational data on other kernel elements of these pairings.
\end{abstract}

\maketitle

{\small
	\noindent\emph{Mathematics Subject 
		Classification 2020:} 11F25, 11F30, 11F41, 11F52, 11F67.
		
\noindent \emph{Keywords:} Function field, Drinfeld modular form, Harmonic cochain, Hecke operator, Hecke algebra, Pairing, Modular symbol.
}

\smallskip
\thispagestyle{empty}

\section{Introduction}

Classical cusp forms and elements of the Hecke algebra are related by the $q$-pairing $(T,f) \mapsto a_1(Tf)$ induced by the first coefficient of the $q$-expansion. This pairing is perfect over~$\C$, and over~$\Z$ for cusp forms with integral coefficients (see \cite[Thm. 2.2]{Ribet} and \cite[Lem.~1.2]{Wiese-ComputationalAMF} for instance). It is a consequence of the well-known formula on the Hecke operator~$T_n$:
\begin{equation}\label{eq-a1TnF}
\forall n\geq 1,\quad	a_n(f)=a_1(T_n f) .
\end{equation}
Through this perfectness, the space of cusp forms and its corresponding Hecke algebra are linear dual to each other, a fact that has several consequences for the study of modular forms. For instance it serves as a core ingredient in Mazur's formal immersion method for computing rational points on modular curves \cite{MazRationalisogenies}. It also provides explicit bounds for generators of the Hecke algebra, which are derived from Sturm bounds for cusp forms (see the appendix of \cite{LarioSchoof}). A full exposition, based on this duality, of computational aspects of classical modular forms over any ring is given in \cite{Wiese-ComputationalAMF}.

\smallskip
The aim of this paper is to study similar pairings for modular forms over function fields. Let~$q$ be a power of a prime number $p$, $A$ the ring $\Fq[T]$, $K$ the rational function field $\Fq(T)$. Let $\Kinf$ be the completion of $K$ with respect to the place $\infty = 1/T$ and $\Cinf$ the completion of an algebraic closure of $\Kinf$. For an ideal $\nn$ of $A$, we consider the Hecke congruence subgroup $\Gamma_0(\nn) = \left\{ \left(\begin{smallmatrix}a&b\\c&d\end{smallmatrix}\right) \in \GL_2(A) \grandmid  c\equiv 0 \bmod \nn \right\}$ of $\GL_2(A)$. Recall that there are two notions of modular forms attached to $\Gamma_0(\nn)$ in this setting:
\begin{itemize}
	\item Drinfeld modular forms, defined on $\Omega := \Cinf - \Kinf$ with values in $\Cinf$. They have a so-called $t$-expansion at infinity, indexed by non-negative integers (see Section~\ref{sec-DMF}). We denote by $M_{2,1}^{0,0}(\nn)$ the $\Cinf$-vector space of doubly cuspidal Drinfeld modular forms of weight $2$ and type~$1$ for $\Gamma_0(\nn)$.
	\item Harmonic cochains, defined on the edges of the Bruhat-Tits tree attached to $\PGL_2(\Kinf)$. Let $\Harm_{0}(\nn,\Z)$ be  the $\Z$-module  of cuspidal harmonic cochains invariant under $\Gamma_0(\nn)$ with values in $\Z$. They have a Fourier expansion indexed by non-zero ideals of $A$ (see Section~\ref{sec-HC}).
\end{itemize}
The Hecke operators $T_\pp$ are indexed by non-zero ideals $\pp$ of $A$ and they act linearly on Drinfeld modular forms and harmonic cochains. They generate the commutative Hecke algebras $\TT(\nn,\Cinf)$ and $\TT(\nn,\Z)$ which are subspaces of $\End_{\Cinf}(M_{2,1}^{0,0}(\nn))$ and $\End_{\Z}(\Harm_{0}(\nn,\Z))$ respectively.
By Gekeler--Reversat \cite{GR}, there is a Hecke-compatible isomorphism between  $\Harm_{0}(\nn,\Z)\otimes_{\Z} \Fp$ and a subspace of weight-$2$ Drinfeld modular forms with residues in $\Fp$. This provides a canonical isomorphism  $\TT(\nn,\Cinf) \simeq \TT(\nn,\Z)\otimes_{\Z} \Cinf$ (see Section~\ref{ssection-comparisonisom}). 

Taking the first coefficient of the $t$-expansion for Drinfeld modular forms, and the Fourier coefficient with respect to the trivial ideal for harmonic cochains, we have two pairings (see Sections~\ref{ssec-pairingDMF} and \ref{ssec-pairingHC}):
\begin{align*}
&\pairing_{\DMF} : \TT(\nn,\Cinf) \times M_{2,1}^{0,0}(\nn)\longrightarrow \Cinf, \\
& \pairing_{\HC,\Z} : \TT(\nn,\Z) \times \Harm_{0}(\nn,\Z)  \longrightarrow  \Z.
\end{align*}

They appear to be unrelated because there is no well-understood connection between \linebreak $t$-expansions of Drinfeld modular forms and Fourier expansions of harmonic cochains. 
However since both $\TT(\nn,\Z)$ and $\Harm_0(\nn,\Z)$ are finitely generated free $\Z$-modules of the same rank, the spaces involved in these pairings have compatible dimensions. It is natural to ask whether they are perfect.

\smallskip
Let us highlight previously known results on them and significant differences with the classical $q$-pairing. For Drinfeld modular forms, we could prove in \cite{ArmCoeffDMF} the perfectness of $\pairing_{\DMF}$ in the special case when $\nn$ is a prime of degree~$3$ but there is no known formula close to~\eqref{eq-a1TnF} in general. More intricate expressions involving $A$-linear combinations of Hecke operators were obtained in \cite{ArmCoeffDMF} (see Section~\ref{ssec-pairingDMF}). However in general they are not sufficient to get the perfectness of $\pairing_{\DMF}$ since only a small proportion of coefficients of the $t$-expansion is obtained in this way. 

For harmonic cochains, Gekeler \cite{GekAnalytical} gave a formula analogous to~\eqref{eq-a1TnF} which involves a power of $q$. He showed that the pairing $\pairing_{\HC,\Z}$  has zero left and right-kernel and becomes perfect after tensoring with $\Z[p^{-1}]$. More recently Papikian and Wei \cite{PapWei-JNT2016} proved that $\pairing_{\HC,\Z}$ is perfect over~$\Z$ when $\deg\nn=3$. See Section~\ref{ssec-pairingHC} for a review.

\smallskip
Our main result shows that, in a quite general setting, these pairings behave differently from the classical $q$-pairing.
\begin{theo}[Corollary \ref{coro-main} in the text]\label{th-main-intro}
	Suppose that the ideal $\nn$ is prime of degree~$\geq 5$. The pairings $\pairing_{\DMF}$ and $\pairing_{\HC,\Z}$ are not perfect.
\end{theo}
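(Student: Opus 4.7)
The plan is to exhibit a single element $t\in\TT(\nn,\Z)$ whose reduction modulo $p$ is non-zero and which lies in the left kernel of both pairings in the appropriate sense. For $\pairing_{\DMF}$, a $\Cinf$-bilinear pairing between $\Cinf$-vector spaces of the same finite dimension, non-perfectness is equivalent to a non-trivial left kernel in $\TT(\nn,\Cinf)\simeq\TT(\nn,\Z)\otimes_{\Z}\Cinf$, so any non-zero integral $t$ with $a_1(t\cdot f)=0$ for every $f$ will do. For $\pairing_{\HC,\Z}$, Gekeler has already shown that the left and right $\Z$-kernels vanish and that the pairing becomes perfect after inverting $p$; non-perfectness over $\Z$ is therefore equivalent to the induced map $\TT(\nn,\Z)\to\Hom_{\Z}(\Harm_0(\nn,\Z),\Z)$ having non-trivial $p$-torsion cokernel, and this is the same as asking for some $t\in\TT(\nn,\Z)\setminus p\TT(\nn,\Z)$ with $\pairing_{\HC,\Z}(t,\Phi)\in p\Z$ for every $\Phi\in\Harm_0(\nn,\Z)$. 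A single $t$ satisfying both requirements proves the theorem.

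I would construct $t$ on the Drinfeld side first. The $A$-linear expressions of \cite{ArmCoeffDMF} write certain $t$-expansion coefficients $a_n(f)$ as combinations of the quantities $a_1(T_{\pp}f)$, but only a restricted family of indices $n$ is reached by them. For $\deg\nn\geq 5$ the functionals $\{f\mapsto a_1(T_{\pp}f)\}_{\pp}$ therefore cannot span the linear dual of $M_{2,1}^{0,0}(\nn)$, and any non-trivial $\Cinf$-linear relation among them produces a combination $t=\sum_i c_i T_{\pp_i}$ with $a_1(t\cdot f)=0$ for every $f$; clearing denominators places $t$ in $\TT(\nn,\Z)$. To transport the same $t$ to harmonic cochains I would use Gekeler's formula expressing $a_1(T_{\pp}\Phi)$ as a power of $q$ times the $\pp$-th Fourier coefficient of $\Phi$, together with the Gekeler--Reversat identification of $\Harm_0(\nn,\Z)\otimes_{\Z}\Fp$ with a space of weight-$2$ mod-$p$ Drinfeld forms in a Hecke-compatible way. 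The defining relation of $t$ is then visible on the harmonic-cochain side only up to a power of $q$, hence up to a power of $p$, so $\pairing_{\HC,\Z}(t,\Phi)\in p\Z$ for every $\Phi$.

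The main obstacle is proving that $\bar t\ne 0$ in $\TT(\nn,\Z)\otimes_{\Z}\Fp$, since the construction of $t$ is indirect and the possibility $\bar t=0$ is a priori consistent with every relation used. To settle this I would pass to modular symbols: $\TT(\nn,\Z)$ acts faithfully on the $\Z$-module $\MS(\nn)$ of cuspidal modular symbols over $\Fq(T)$, and a Manin-type presentation of $\MS(\nn)$ coming from unimodular paths in the Bruhat--Tits tree allows each $T_{\pp_i}$ to be written as an explicit integer matrix on a finite basis. Non-vanishing of $\bar t$ then reduces to checking that the reduction modulo $p$ of the matrix $\sum_i c_i T_{\pp_i}$ acts non-trivially on some modular symbol, a concrete finite calculation. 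The delicate part is to organize this check uniformly for every prime $\nn$ of degree $\geq 5$, presumably by exhibiting a distinguished modular symbol (for instance one attached to a path near a cusp) on which $\bar t$ provably does not act as zero. Once this final verification is in place, both conclusions of the theorem follow from the set-up of the first paragraph.
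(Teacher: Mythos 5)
Your overall architecture is the same as the paper's: find one integral Hecke element, non-zero modulo $p$, lying in the left kernel of $\pairing_{\DMF}$ and killing $c_1$ modulo $p$ on harmonic cochains; deduce non-perfectness of $\pairing_{\HC,\Z}$ from the failure of the mod-$p$ pairing; and prove non-vanishing by letting the element act on Teitelbaum's Manin-type generators of modular symbols. Your reformulation of non-perfectness over $\Z$ in terms of a $p$-torsion cokernel is also correct. However, three steps as written have genuine gaps.

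First, the existence of the kernel element on the Drinfeld side is asserted rather than proved. That the $A$-linear expressions of \cite{ArmCoeffDMF} only recover a restricted set of $t$-expansion coefficients does not imply that the functionals $f\mapsto b_1(T_\pp f)$ fail to span the dual of $M_{2,1}^{0,0}(\nn)$; the coefficients that \emph{are} reached could a priori still separate points. The paper instead takes the explicit element $\sum_{\deg\pp\leq 1}T_\pp$, whose membership in the left kernel of $\pairing_{\DMF}$ is a non-trivial identity on $t$-expansions (Theorem~\ref{thm-coeffDMF}). Second, and more seriously, the transport to harmonic cochains does not work as described: there is no known relation between the first $t$-expansion coefficient $b_1$ of a Drinfeld modular form and the Fourier coefficient $c_1$ of the associated harmonic cochain (the Gekeler--Reversat residue isomorphism does not translate one expansion into the other), so the relation $b_1(tf)=0$ gives no information about $c_1(t\Phi)$, even up to powers of $q$. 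The paper proves the harmonic-cochain kernel statement by an independent computation (Proposition~\ref{prop-eltnoyaucochaine}) using Gekeler's Fourier inversion formula, in which $\sum_{\deg\mm\leq d}\nu(Mu)\,T_\mm\equiv-\sum_{\deg\mm\leq d}T_\mm\bmod p$ while the inverted side lies in $q\Z$; that the same element works for both pairings is a coincidence the paper exploits, not a formal consequence of one identity from the other. Third, for the non-vanishing modulo $p$ you correctly identify modular symbols and the finite presentation as the tool, but the decisive ingredient is missing: after computing $\sum_{\deg\pp\leq 1}T_\pp\,x(u:1)$ explicitly (Proposition~\ref{prop-main}), one must know that the resulting combination of Teitelbaum generators is non-zero, and this requires the explicit linearly independent family of generators $\xi(u:v)$ with $\deg v<\deg u<\deg(\nn)/2$ (Theorem~\ref{theo-famindepSM}); this is exactly where the hypothesis $\deg\nn\geq 5$ enters. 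Without such an independence statement, exhibiting ``a distinguished modular symbol near a cusp'' does not let you conclude.
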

As a consequence, the determinant of $\pairing_{\HC,\Z}$, with respect to any $\Z$-bases of $\TT(\nn,\Z)$ and $\Harm_{0}(\nn,\Z)$, is a positive power of~$p$. Indeed we will prove that the reduction of $\pairing_{\HC,\Z}$ modulo $p$, denoted by $\pairing_{\HC,\Fp}$, is not perfect.

\smallskip
Interestingly in the proof of Theorem~\ref{th-main-intro} a key-role is played by the same element of the Hecke algebra, namely $\sum_{\deg \pp \leq 1} T_\pp$. We proceed in two steps.

First we show that this element is in the left-kernel of $\pairing_{\DMF}$ and $\pairing_{\HC,\Fp}$. For $\pairing_{\DMF}$, it has been already proven in \cite{ArmCoeffDMF}. For $\pairing_{\HC,\Fp}$, this is Proposition~\ref{prop-eltnoyaucochaine}: as far as we know, there were no such kernel elements for harmonic cochains modulo~$p$ in the previous literature.

Then we show that $\sum_{\deg \pp \leq 1} T_\pp$ is non-zero in the Hecke algebra $\TT(\nn,\Z)\otimes_{\Z} \Fp$, i.e. as an endomorphism of the space of modular forms. This is Theorem~\ref{th-main}, which proves Conjecture~6.9 formulated in \cite{ArmCoeffDMF}.  To do this we use the formalism of modular symbols over $\Fq(T)$ developed by Teitelbaum \cite{TeiMS}. First we prove a canonical isomorphism between spaces of modular forms and modular symbols modulo~$p$ which is compatible with Hecke operators (Lemma~\ref{lemm-isomcarp}). It transfers our problem to showing that $\sum_{\deg \pp \leq 1} T_\pp$ is a non-zero endomorphism on this space of modular symbols. Namely we only have to show that, when this element of the Hecke algebra is evaluated at some modular symbol, it is non-zero. Such a modular symbol is chosen and this evaluation is done in Proposition~\ref{prop-main}. To prove it we make an essential use of Teitelbaum's finite presentation of modular symbols, which is the function field counterpart of Manin's presentation. This presentation provides a way to treat the problem almost independently of the level $\nn$. The proof of Proposition~\ref{prop-main} is then of a combinatorial nature and relies on cancellation of terms modulo~$p$. To conclude, we have to show that the result of the evaluation from the proposition is non-zero: this follows from the main result of \cite{ArmBaseSM} which gives a large explicit family of linearly independent modular symbols among the generators of the presentation (such a result has no known analog for classical modular symbols). This is the reason for the assumption on $\nn$ in Theorem~\ref{th-main-intro}.
\smallskip

Theorem~1.5 in \cite{ArmTorsion} is a result on rational points of Drinfeld modular curves, based on an adaptation of Mazur's formal immersion method. Its main assumption is the perfectness of a variant of the pairing $\pairing_{\DMF}$ which is constructed from the winding element modulo~$p$. Theorem~\ref{th-main-intro} highlights very significative differences when trying to adapt Mazur's method to the Drinfeld setting. However it is not enough to show that this assumption is not satisfied: to decide on this hypothesis would require more precise knowledge on the left-kernel of $\pairing_{\DMF}$, starting with a generating family.

\smallskip
Other elements in the left-kernel of the pairings $\pairing_{\DMF}$ and $\pairing_{\HC,\Fp}$ are given in Theorem~\ref{thm-coeffDMF}, Corollary~\ref{coro-coeffDMF} and Proposition~\ref{prop-eltnoyaucochaine}. They involve Hecke operators with index of degree~$\geq 2$: one may wonder whether these elements also are non-zero endomorphisms. However the proof of Proposition~\ref{prop-main} seems technically delicate to extend to higher degrees: it relies on a general explicit formula from \cite{ArmBaseSM} for Hecke operators in terms of generators of Teitelbaum's presentation which becomes more difficult to handle here as soon as the degree is higher than one (see Proposition~\ref{prop-formuleMerel} and Remark~\ref{rema-TPdeg2}). In the last section, we gather computational data on these left-kernel elements with higher degree index. They suggest that Theorem~\ref{th-main} cannot be extended to prime levels~$\nn$ of degree~$4$. When $\nn$ is prime, our examples also suggest that the left-kernel of $\pairing_{\HC,\Fp}$ has dimension at least~$2$ if $\deg \nn \geq 7$, and the left-kernel of $\pairing_{\DMF}$ has dimension at least~$3$ if $\deg \nn \geq 5$. Overall we see this paper as a first step in understanding the behavior of these pairings.

\smallskip
The text is organized as follows. After setting some notations in Section~\ref{sec-notations}, we recall results on Drinfeld modular forms and harmonic cochains, their $t$-expansion, Fourier expansion, and Hecke operators in Section~\ref{sec-prelimfmod}. We also state comparison isomorphisms of Gekeler--Reversat between them. In Section~\ref{sec-pairings}, we introduce the pairings $\pairing_{\DMF}$, $\pairing_{\HC,\Z}$ and $\pairing_{\HC,\Fp}$ and exhibit elements in their left-kernels. Section~\ref{sec-MS} is devoted to the theory of modular symbols over $K$: its new contribution is Lemma~\ref{lemm-isomcarp}.
In Section~\ref{section-nonzeroelts} we prove  Theorem~\ref{th-main} on the element $\sum_{\deg \pp \leq 1}  T_\pp$ using computations with modular symbols, and Theorem~\ref{th-main-intro} as its corollary. Finally in Section~\ref{sec-expdata} we present computational data that go beyond Theorem~\ref{th-main-intro} and conclude with open questions.

\subsection*{Acknowledgments}
The author would like to thank M. Papikian and F.-T. Wei for discussions during the conference “New developments in the theory of modular forms over function ﬁelds” in Pisa, 2018, as well as the organizers of this event. In particular she would like to thank F.-T. Wei for suggesting to extend the study to the pairing $\pairing_{\HC,\Fp}$.

\smallskip
This work is distributed under a Creative Commons Attribution — 4.0
International licence (CC-BY 4.0): \url{https://creativecommons.org/licenses/by/4.0/}.
This research was funded, in whole or in part, by ANR -- France (French National Research Agency), PadLEfAn project ANR-22-CE40-0013. A CC-BY public copyright license has been applied by the author to the present document and will be applied to all subsequent versions up to the Author Accepted Manuscript arising from this preliminary version, in accordance with the grant's open access conditions.

\section{Notations}\label{sec-notations}

Let $\Fq$ be a finite field with $q$ elements, where $q$ is a power of a prime number $p$. Let $A = \Fq[T]$, the ring of polynomials in $T$ with coefficients in $\Fq$. We denote by $A_{+}$ the set of monic polynomials in $A$ and for any non-negative integer $d$, $A_{+\leq d}$ (resp. $A_{+d}$) the subset of polynomials of degree at most $d$ (resp. of degree $d$).

For convenience, we will call an \emph{ideal of $A$} any non-zero ideal $\nn$ and denote by $\deg \nn$ the degree of its monic generator. The \emph{level-$\nn$ Hecke congruence subgroup} of $\GL_2(A)$ is:
$$\Gamma_0(\nn) = \left\{ \begin{pmatrix}a&b\\c&d\end{pmatrix} \in \GL_2(A) \grandmid  c\equiv 0 \bmod \nn \right\}.$$
The group $\GL_2(K)$ acts from the left on the projective line $\PP^{1}(K)$ by fractional linear transformations. The \emph{cusps of $\Gamma_0(\nn)$} are the orbits under the subgroup $\Gamma_0(\nn)$ and we set $\cusps(\nn)=\Gamma_0(\nn) \bs \PP^{1}(K)$.

Let $K =\Fq(T)$ be the rational function field in $T$ i.e. the fraction field of $A$. Put $\pi = 1/T$ and let $\infty$ be the place of $K$ corresponding to $\pi$, with the corresponding absolute value $|\cdot |$, normalized such that for any $a\in A$, $|a|=q^{\deg a}$. The completion of $K$ with respect to $|\cdot|$ is the field $\Kinf = \Fq((\pi))$ with ring of integers $\Oinf = \Fq[[\pi]]$. Let $\Cinf$ be the completion of an algebraic closure of $\Kinf$.

To define the Hecke operators $T_\pp$ acting on various objects attached to $\Gamma_0(\nn)$, we will need a set of matrices in $M_2(A)$ for any ideal $\pp$ of $A$:
$$ S_{\pp} = \left\{ \begin{pmatrix} a & b \\ c & d \end{pmatrix} \grandmid (a,d) \in A_{+}\times A_{+}, (ad) = \pp, (a)+\nn = A, (b,d) \in A\times A, \deg b < \deg d\right\}.$$

\section{\texorpdfstring{Preliminaries on modular forms over $\Fq(T)$}{Preliminaries on modular forms over FqT}}\label{sec-prelimfmod}

\subsection{Drinfeld modular forms}\label{sec-DMF}
We recall basic properties on Drinfeld modular forms, their $t$-expansions and Hecke operators. For further details, see \cite{GekLNM,GekCoeffDMF,GR}.

\subsubsection{\texorpdfstring{Drinfeld modular forms and their $t$-expansion}{Drinfeld modular forms and their t-expansion}}\label{subsec-DMFexp}
Let $\Omega = \Cinf - \Kinf$, the \emph{Drinfeld half plane}. It is a rigid analytic space of dimension~$1$ over $\Cinf$ equipped with the left action of $\GL_2(\Kinf)$ given by fractional linear transformations. 
The affine \emph{Drinfeld modular curve} $Y_0(\nn)$ attached to $\Gamma_0(\nn)$ is an algebraic curve over~$K$ whose set of $\Cinf$-points is $Y_0(\nn)(\Cinf)=\Gamma_0(\nn) \bs \Omega$. It has a natural compactification $X_{0}(\nn)$ over~$K$ with $X_{0}(\nn) = Y_{0}(\nn) \sqcup \cusps(\nn)$ and $X_{0}(\nn) (\Cinf)= \Gamma_0(\nn) \bs (\Omega \sqcup \PP^{1}(K))$. The genus of $X_0(\nn)$ is denoted by $g(\nn)$ and the number of cusps by $c(\nn)$. Explicit formulas for $g(\nn)$ and $c(\nn)$ are given in \cite[Thm.~2.17]{GN} and \cite[Prop.~6.7]{GekInvDMC}.

\begin{defi}
Let $\nn \lhd A$ and $k, m$ be non-negative integers with $0 \leq m \leq q-2$ ($m$ represents a congruence class in $\Z/(q-1)\Z$). A \emph{Drinfeld modular form of weight~$k$ and type $m$ for $\Gamma_0(\nn)$} is a rigid holomorphic function $f : \Omega \to \Cinf$ such that:
\begin{enumerate}
	\item\label{item-defdmf-1} for all $\gamma = \begin{pmatrix}a&b\\c&d\end{pmatrix} \in \Gamma_0(\nn)$ and for all $z\in\Omega$, we have
	$$ f\left( \frac{az+b}{cz+d}\right) = (\det \gamma)^{-m} (cz+d)^k f(z),$$
	\item\label{item-defdmf-2} $f$ is holomorphic at all cusps of $\Gamma_0(\nn)$.
\end{enumerate}
\end{defi}
For Condition~\ref{item-defdmf-2}, see \cite[V.3]{GekLNM} and \cite[Section~2]{GR}. At the cusp infinity, it may be stated as follows. For $z\in\Omega$, let
$$ t(z) = \frac{1}{\picarlitz} \sum_{a \in A} \frac{1}{z-a}$$
where $\picarlitz$ is the period of the Carlitz exponential function. The map $t$ is holomorphic on $\Omega$ and satisfies for all $a\in A$, $t(z+a)=t(z)$. It is a uniformizer at the cusp infinity on $X_0(\nn)$. Any rigid holomorphic function $f:\Omega \to \Cinf$ satisfying Condition~\ref{item-defdmf-1} has an expansion of the form $ f(z) = \sum_{n\in\Z} a_n(f) t(z)^n$, valid for all $z\in\Omega$ such that $|t(z)|$ is small enough. The coefficients $(a_n(f))_{n\in \Z}$ are in $\Cinf$ and uniquely determine $f$. At the cusp infinity, Condition~\ref{item-defdmf-2} then means that the coefficients $(a_n(f))_{n<0}$ are zero, so the expansion has the form $$f(z) =   \sum_{n\geq 0} a_n(f) t(z)^n.$$
This is the \emph{$t$-expansion} of the Drinfeld modular form $f$. For simplicity we will omit the condition that $|t(z)|$ is small enough for this equality to hold.

\smallskip The weight $k$, the type $m$, and the coefficients of the $t$-expansion are related by elementary conditions coming from the matrices $\smallpmatrix{\lambda}{0}{0}{\lambda}$ and $\smallpmatrix{\lambda}{0}{0}{1}$ in $\Gamma_0(\nn)$, for all  $\lambda \in \Fq\et$.
Condition~\ref{item-defdmf-1} with the first set of matrices implies that $f$ is identically zero unless $k\equiv 2m \bmod(q-1)$. Moreover since $t(\lambda z)=  \lambda^{-1}t(z)$, Condition~\ref{item-defdmf-1} with the second set of matrices implies that $a_n(f) = 0$ unless $n\equiv m \bmod (q-1)$. For any $i\geq 0$, we put $b_i(f) = a_{m+(q-1)i}(f)$. With this renormalization, the $t$-expansion of $f$ may now be written as
\begin{equation}\label{eq-texpdmf}
f =  \sum_{i\geq 0} b_i(f)\, t^{m+(q-1)i}.\end{equation}

\smallskip  Let $M_{k,m}(\nn)$ be the $\Cinf$-vector space of Drinfeld modular forms of weight $k$ and type $m$ for $\Gamma_0(\nn)$. It has finite dimension, see \cite[V.6]{GekLNM}. When $\Gamma_0(\nn)=\GL_2(A)$, we will denote it by $M_{k,m}(\GL_2(A))$.

If $a_0(f) = 0 = a_1(f)$ and if similar conditions hold at the other cusps, $f$ is said to be \emph{doubly cuspidal} (Goss had observed that this condition plays a role similar to classical cusp forms). Let $M_{k,m}^{0,0}(\nn)$ be the subspace of doubly cuspidal Drinfeld modular forms in $M_{k,m}(\nn)$. When $(k,m)=(2,1)$, $M_{2,1}^{0,0}(\nn)$ is isomorphic to the space of holomorphic differential forms on the Drinfeld modular curve $X_0(\nn)$. In particular we have $\dim_{\Cinf} M_{2,1}^{0,0}(\nn)=g(\nn)$.

\begin{rema}\label{rema-DMF}
The following observations are derived immediately from the $t$-expansion~\eqref{eq-texpdmf}.
\begin{itemize}
	\item Any Drinfeld modular form of type $m>1$ is doubly cuspidal.
	\item If $f$ is doubly cuspidal and $m\in\{0,1\}$, the coefficient $b_0(f)$ is zero so its $t$-expansion starts with the term $b_1(f)t^{m+(q-1)}$.
\end{itemize}
\end{rema}

\subsubsection{Hecke operators on Drinfeld modular forms} They are defined using the left-action on the Drinfeld half plane $\Omega$ by the matrices in $S_\pp$.

\begin{defi}
	Let $\pp\lhd A$ with monic generator $P\in A$. For $f\in M_{k,m}(\nn)$, let
$$	\forall z \in \Omega, \qquad (T_\pp f)(z) = \frac{1}{P} \sum_{\smallpmatrix{a}{b}{0}{d} \in S_\pp} a^k f \left( \frac{az+b}{d} \right).$$
\end{defi}
This defines a $\Cinf$-linear transformation $T_\pp$ of the space $M_{k,m}(\nn)$, called the \emph{$\pp$-th Hecke operator}. We also denote it by $T_P$.
\begin{rema}
	We follow the normalization of \cite[4.3]{ArmCoeffDMF} which differs from other references. A more standard choice would be $P^{m+1-k}T_\pp$: it coincides with our definition of $T_\pp$ when $(k,m)=(2,1)$, which will be the setting for most of this text.
\end{rema}

We recall usual properties of the Hecke operators $T_\pp$ acting on Drinfeld modular forms:
\begin{itemize}
	\item for \emph{any} ideals $\pp,\pp'$ of $A$, they satisfy $T_\pp T_{\pp'} =T_{\pp \pp'} =T_{\pp'} T_\pp$,
	\item they stabilize the subspace $M_{k,m}^{0,0}(\nn)$.
	\end{itemize}
The first property distinguishes them from Hecke operators acting on classical modular forms, or on harmonic cochains in characteristic zero (see Section~\ref{sssection-heckeopharm}).

\begin{defi}
	Let $\TT_{k,m}(\nn,\Cinf)$ be the $\Cinf$-subalgebra of $\End_{\Cinf}(M_{k,m}^{0,0}(\nn))$ generated by the Hecke operators $(T_\pp)_{\pp \lhd A}$. This commutative algebra is called the \emph{Hecke algebra for doubly cuspidal Drinfeld modular forms}. If $(k,m)=(2,1)$, we will simply denote it by $\TT(\nn,\Cinf)$.
\end{defi}

\subsection{Harmonic cochains}\label{sec-HC}
We recall basic properties on harmonic cochains for the Bruhat-Tits tree of $\PGL_2(\Kinf)$, their Fourier expansion and Hecke operators. For further details we refer to \cite{GekAnalytical,GekImproper,GR}.

\subsubsection{\texorpdfstring{The Bruhat-Tits tree of $\PGL_2(\Kinf)$}{The Bruhat-Tits tree of PGL2(Kinf)}}\label{subs-BTtree}

\begin{defi}
The \emph{Bruhat-Tits tree $\BT$ attached to $\PGL_2(\Kinf)$} is the tree with set of vertices $V(\BT) = \GL_2(\Kinf)/\Kinf\et \, \GL_2(\Oinf)$ and set of oriented edges $E(\BT) = \GL_2(\Kinf)/\Kinf\et\, \Iwa$ where $\Iwa$ is the Iwahori subgroup
$$\Iwa = \left\{ \begin{pmatrix} a&b\\c&d\end{pmatrix} \in \GL_2(\Oinf) \grandmid c \equiv 0 \bmod \pi \right\}.$$
\end{defi}
It is a $(q+1)$-regular tree. For any edge $e \in E(\BT)$, let $o(e) \in V(\BT)$ be its origin and $\overline{e} \in E(\BT)$ be the opposite edge. If the edge $e$ is represented by a matrix $g\in \GL_2(\Kinf)$,
its origin $o(e)$ is represented by $g$ and the opposite edge $\overline{e}$ is represented by $g\smallpmatrix{0}{1}{\pi}{0}$.
A set of representatives for the edges up to orientation is (see \cite[Section~1]{GekImproper}):
$$ \begin{pmatrix} \pi^k & u \\ 0 & 1 \end{pmatrix} \quad(k\in\Z, u \in\Kinf/ \pi^k \Oinf).$$

The arithmetic group $\Gamma_0(\nn)$ acts from the left on $\GL_2(\Kinf)$ without inversion. It provides a quotient graph $\Gamma_0(\nn)\bs\BT$ whose set of edges is $\Gamma_0(\nn)\bs V(\BT)$ and set of oriented edges is $\Gamma_0(\nn)\bs E(\BT)$. The structure of the graph $\Gamma_0(\nn)\bs \BT$ is well-understood: it is the edge-disjoint union of a finite subgraph with a finite number of half-lines, called the \emph{cusps of $\Gamma_0(\nn)\bs\BT$}, which are in bijection with the cusps of $\Gamma_0(\nn)$ (see \cite[II.2.3 thm. 9 and II.2.8 p. 172]{SerreTrees}, \cite{GN}). Also the genus of the graph $\Gamma_0(\nn)\bs \BT$ coincides with the genus $g(\nn)$ of $X_0(\nn)$.
\subsubsection{Harmonic cochains}\label{sssection-hcoch} Let $R$ be a commutative ring with unity.

\begin{defi}An $R$-valued \emph{harmonic cochain} on $\BT$ is a map $F : E(\BT) \to R$ satisfying:
\begin{enumerate}
	\item for all $e \in E(\BT)$, $F(e)+F(\overline{e}) =0$,
	\item for all $v\in V(\BT)$, $\displaystyle\sum_{e \in E(\BT),\, o(e)=v} F(e)=0$.
\end{enumerate}
We say that $F$ is \emph{$\Gamma_0(\nn)$-invariant} if, for all $\gamma \in \Gamma_0(\nn)$ and $e\in E(\BT)$, we have $F(\gamma e) = F(e)$. When $F$ is $\Gamma_0(\nn)$-invariant, we say that $F$ is \emph{cuspidal} if it  is finitely supported when viewed as a function on the edges of the graph $\Gamma_0(\nn)\bs\BT$.
\end{defi}
Let $\Harm(\nn,R)$ be the group of $R$-valued harmonic cochains on $\BT$ which are $\Gamma_0(\nn)$-invariant, and $\Harm_{0}(\nn,R)$ the subgroup consisting of the cuspidal ones. The canonical map
$$ \Harm_{0}(\nn,\Z) \otimes_{\Z} R \to \Harm_{0}(\nn,R)$$
is injective but not surjective in general. Let $\Harm_{0,0}(\nn,R)$ be its image in $\Harm_{0}(\nn,R)$ (see \cite[3.6.2]{GR}). By definition, we have $\Harm_{0,0}(\nn,R) \simeq \Harm_{0}(\nn,\Z)\otimes_{\Z} R$ for any ring $R$ and this construction commutes with arbitrary ring extensions $R'/R$. However it is clear that the formation of $\Harm(\nn,R)$ and $\Harm_{0}(\nn,R)$ only commutes with flat ring extensions $R'/R$ (for instance $\Z \subset \Q \subset \C$). Moreover if $R$ is flat over $\Z$, we have $\Harm_{0}(\nn,R)=\Harm_{0,0}(\nn,R)$.

\begin{rema}When $R=\C$, Drinfeld has shown that such harmonic cochains, with additional assumptions, have an interpretation as automorphic forms for $\GL_2(K)$ (see  \cite[Section~4]{GN} for a precise formulation).
\end{rema}

When $R=\Z$, it is known from \cite[3.2]{GR} that $\Harm(\nn,\Z)$ and $\Harm_{0}(\nn,\Z)$ are finitely generated free $\Z$-modules with 
$$\rk_{\Z} \Harm	(\nn,\Z) = g(\nn)+c(\nn)-1\quad\text{and}\quad \rk_{\Z} \Harm_0(\nn,\Z) = g(\nn).$$

\subsubsection{Fourier expansion} Our references are \cite{GekAnalytical,GekImproper}. Although we will not need it, most results of this subsection and the next one may be extended to harmonic cochains with values in a \emph{coefficient ring}, namely 
a ring in which $p$ is invertible and which is a quotient of a discrete valuation ring which contains $p$-th roots of unity (see \cite[2.3]{PapWei-Doc2015}).

Any harmonic cochain $F \in \Harm_{0}(\nn,\C)$ may be viewed as a  $\C$-valued function on $\GL_2(\Kinf)$ which is invariant under the subgroup $\{ \smallpmatrix{a}{b}{c}{d} \in \GL_2(A) \mid c= 0 \}$. Let $\eta_0 : \Fp \to \C\et$ be a chosen non-trivial additive character of $\Fp$. Let $\tr : \Fq\to\Fp$ be the trace and $\eta : \Kinf \to \C\et$ be the additive character defined by 
$$ \eta\left(\sum_{k\in\Z} a_k \pi^k\right)  = \eta_0 (\tr(a_1)).$$

\begin{defi}[{\cite[(3.12)(iii)]{GekAnalytical}}] For any $\mm\lhd A$ with monic generator $M \in A$, the \emph{Fourier coefficient of $F$ with respect to $\mm$} is
\begin{equation}\label{eq-coch-Fcoeff}c_{\mm}(F) = q^{-1-\deg \mm} \sum_{u \in \pi \Oinf / \pi^{2+\deg \mm}\Oinf} F\left( \begin{pmatrix} \pi^{2+\deg \mm}&u\\0&1\end{pmatrix}\right) \eta(-Mu).
\end{equation}
When $\mm=A$ is the trivial ideal, we denote it by $c_{1}(F)$.
\end{defi}

We collect basic properties of the Fourier expansion.
\begin{prop}[Gekeler]\label{prop-gekDvtfour}Let $F \in \Harm_{0}(\nn,\C)$.
	\begin{enumerate}
		\item\label{item-coch-Fexp} We have 
		$$
			\forall k\in\Z, \, \forall u \in \Kinf, \quad F\left( \begin{pmatrix}\pi^k & u \\ 0 & 1 \end{pmatrix}\right) = \sum_{0 \leq j  \leq k-2} q^{-k+2+j} \sum_{\deg \mm =j} c_{\mm}(F) \nu(Mu)
		$$
		where $\nu(y)$ is equal to $-1$ if $y$ has a term of order~$\pi$ in its $\pi$-expansion, and $q-1$ otherwise. In particular the Fourier coefficients $(c_{\mm}(F))_{\mm\lhd A}$ uniquely determine $F$ in $\Harm_0(\nn,\C)$.
		\item If $F \in \Harm_{0}(\nn,\Z)$, its Fourier coefficients $(c_{\mm}(F))_{\mm\lhd A}$ belong to $\Z[p^{-1}]$.
		\item\label{item-CF1} The Fourier coefficient with respect to the trivial ideal is
		$$
			c_{1}(F) = -F \left( \begin{pmatrix}\pi^2 & \pi \\0 & 1 \end{pmatrix}\right).
		$$
	\end{enumerate}
\end{prop}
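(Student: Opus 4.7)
The plan is to establish the three parts in order, treating (1) as the main content and deriving (2) and (3) from it.

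For part (1), I view $F$ as a function on $\GL_2(\Kinf)$ that is left-invariant under the parabolic subgroup $\{\smallpmatrix{a}{b}{c}{d} \in \GL_2(A) \mid c=0\}$ and right-invariant modulo $\Kinf\et\,\Iwa$. For fixed $k \in \Z$, consider $\phi_k: \Kinf \to \C$, $u \mapsto F(\smallpmatrix{\pi^k}{u}{0}{1})$. The edge-representative normalization forces $\phi_k$ to descend to $\Kinf/\pi^k\Oinf$; invariance under $\smallpmatrix{1}{a}{0}{1}$ for $a \in A$ reduces it further to a function on the finite group $\pi\Oinf/\pi^k\Oinf \cong \Kinf/(A+\pi^k\Oinf)$. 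Under Pontryagin duality, characters of $\Kinf/A$ are identified with $A$ via $M \mapsto \eta(M\,\cdot)$, yielding a finite Fourier expansion $\phi_k(u) = \sum_{M \in A,\,\deg M \leq k-2} \widehat{\phi}_k(M)\,\eta(Mu)$. The constant term ($M=0$) vanishes by cuspidality of $F$. Next, the matrix $\smallpmatrix{a}{0}{0}{1} \in \Gamma_0(\nn)$ for $a \in \Fq\et$ acts on $\phi_k$ by $u \mapsto au$, so $\widehat{\phi}_k(aM) = \widehat{\phi}_k(M)$ and the coefficient depends on $M$ only through the ideal $\mm = (M)$. Grouping the Fourier modes over each ideal and using the identity $\sum_{a \in \Fq\et}\eta(ay) = \nu(y)$ converts $\sum_{(M)=\mm}\eta(Mu)$ into $\nu(Mu)$ for $M$ the monic generator. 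Finally, harmonicity of $F$ at the vertices $o(\smallpmatrix{\pi^k}{u}{0}{1})$ relates $\widehat{\phi}_k$ with $\widehat{\phi}_{k+1}$ and fixes the normalization $q^{-k+2+j}$, so one single sequence $(c_\mm(F))$ computes every level $k$. Finite Fourier inversion on $\pi\Oinf/\pi^{2+\deg\mm}\Oinf$ then gives the explicit formula~\eqref{eq-coch-Fcoeff}.

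For part (2), formula~\eqref{eq-coch-Fcoeff} realizes $c_\mm(F)$ as $q^{-1-\deg\mm}$, which lies in $\Z[p^{-1}]$ because $q$ is a power of $p$, times a $\Z$-linear combination of $p$-th roots of unity; so a priori $c_\mm(F) \in \Z[p^{-1}][\zeta_p]$. I would remove $\zeta_p$ by a Galois argument: for $b \in \Fp\et$, the automorphism $\sigma_b : \zeta_p \mapsto \zeta_p^b$ satisfies $\sigma_b(\eta(x)) = \eta(bx)$. Substituting $u \mapsto b^{-1}u$, which is a bijection of $\pi\Oinf/\pi^{2+\deg\mm}\Oinf$, gives
$$\sigma_b(c_\mm(F)) = q^{-1-\deg\mm}\sum_u F(\smallpmatrix{\pi^{2+\deg\mm}}{b^{-1}u}{0}{1})\,\eta(-Mu),$$
and the $\Fq\et$-invariance of $\phi_{2+\deg\mm}$ established in (1) shows the values of $F$ are unchanged, so $\sigma_b(c_\mm(F)) = c_\mm(F)$. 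Hence $c_\mm(F)$ is $\Gal(\Q(\zeta_p)/\Q)$-fixed and lies in $\Z[p^{-1}]$.

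For part (3), I specialize the formula of (1) to $(k,u) = (2,\pi)$: the outer sum collapses to $j=0$, so $\mm = A$ and $M=1$; the prefactor $q^{-k+2+j}$ equals $1$; and $\nu(M u) = \nu(\pi) = -1$ since $\pi$ carries a term of order $\pi$ in its own $\pi$-expansion. Thus $F(\smallpmatrix{\pi^2}{\pi}{0}{1}) = -\,c_1(F)$, which rearranges to the claimed identity.

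The main obstacle is part (1): although the Fourier-analytic skeleton is standard, pinning down the exact shape $c_\mm(F)\,\nu(Mu)$ requires extracting the $\Fq\et$-invariance of $\phi_k$ and recognizing $\nu$ as the partial character sum, and the normalization $q^{-k+2+j}$ must be carefully derived from the harmonicity relation so that the coefficients $(c_\mm(F))$ depend on $F$ alone and not on $k$. Once (1) is secured, parts (2) and (3) follow mechanically.
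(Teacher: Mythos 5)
Your proposal is correct, and it is essentially a reconstruction of the proof in the cited source: the paper itself gives no argument for this proposition beyond the reference to (3.12)(i'), Corollary~3.15 and (3.16) of Gekeler's paper, and the route you take — finite Fourier analysis on $\pi\Oinf/\pi^k\Oinf\cong\Kinf/(A+\pi^k\Oinf)$ with characters $u\mapsto\eta(Mu)$ for $\deg M\le k-2$, the identity $\nu(y)=\sum_{a\in\Fq\et}\eta(ay)$ coming from the $\Fq\et$-invariance $\phi_k(au)=\phi_k(u)$, and the recursion $\widehat\phi_k(M)=q\,\widehat\phi_{k+1}(M)$ from harmonicity, which produces the normalization $q^{-k+2+j}$ and shows $q^{-k}\widehat\phi_k(M)$ is independent of $k$ — is exactly Gekeler's. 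The Galois-descent argument for part (2) and the specialization $(k,u)=(2,\pi)$ for part (3) are also correct. The one step you assert rather than prove is the vanishing of the constant term $\widehat\phi_k(0)$, and it is the only place where the cuspidality hypothesis enters, so it deserves an argument: harmonicity at the vertices shows that $S_k=\sum_{u\in\pi\Oinf/\pi^k\Oinf}F\left(\smallpmatrix{\pi^k}{u}{0}{1}\right)$ is independent of $k$ and equals $F\left(\smallpmatrix{\pi^k}{0}{0}{1}\right)$ for $k\le 1$ (using the $\Gamma_0(\nn)$-invariance under $\smallpmatrix{1}{a}{0}{1}$, $a\in A$, to identify the relevant edges when $k\le 0$); these edges run out the half-line at the cusp $\infty$ of $\Gamma_0(\nn)\bs\BT$, on which a cuspidal cochain eventually vanishes, whence $S_k=0$ for all $k$. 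With that supplied, the proof is complete.
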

\begin{proof}
See (3.12)(i'), Corollary~3.15 and (3.16) in \cite{GekAnalytical}, 
\end{proof}

\subsubsection{Hecke operators on harmonic cochains}\label{sssection-heckeopharm}  They are defined using the left-action on $\GL_2(\Kinf)$ by the matrices in $S_\pp$. Our references are \cite[4.9]{GR} and \cite[1.10]{GekAnalytical}.

\begin{defi}
Let $\pp$ be an ideal of $A$ with monic generator $P \in A$. For $F\in \Harm_{0}(\nn,R)$, let
$$	\forall e \in E(\BT), \qquad (T_\pp F)(e) = \sum_{\smallpmatrix{a}{b}{0}{d} \in S_\pp} F \left( \begin{pmatrix} a & b \\ 0 & d \end{pmatrix} e \right).$$	
\end{defi}
This defines an $R$-linear transformation $T_\pp$ of the space $\Harm_0(\nn,R)$, called the \emph{$\pp$-th Hecke operator}. We also denote it by $T_P$. 

The Hecke operators on cuspidal harmonic cochains stabilize $\Harm_{0}(\nn,R)$ and $\Harm_{0,0}(\nn,R)$. They satisfy:
\begin{itemize}
  \item for any \emph{coprime} ideals $\pp,\pp'$ of $A$, $T_{\pp \pp'} = T_{\pp} T_{\pp'}$,
\item for any prime ideal $\qq \nmid \nn$ and $i\geq 2$, $T_{\qq^{i}} = T_{\qq^{i-1}} T_\qq -q^{\deg \qq} T_{\qq^{i-2}}$, 
\item  for any prime ideal $\qq\mid \nn$ and $i\geq0$, $T_{\qq^i} = T_\qq^{i}$.
\end{itemize}
For $e \in E(\BT)$, let $n(e) = \# (\Gamma_0(\nn)_e / \Fq\et)$ where $\Gamma_0(\nn)_e$ is the stabilizer of $e$ under the action of $\Gamma_0(\nn)$.
Recall that the $\C$-vector space $\Harm_{0}(\nn,\C)$ is equipped with the Hermitian inner product, called the \emph{Petersson product}, defined by
$$\forall (F,G) \in \Harm_{0}(\nn,\C)\times \Harm_{0}(\nn,\C), \quad \langle F,G \rangle = \sum_{e \in  E(\Gamma_0(\nn) \bs \BT)} \frac{1}{2n(e)} F(e) \overline{G(e)}.$$
If $\pp$ and $\nn$ are coprime, the Hecke operator $T_\pp$ is self-adjoint with respect to the Petersson product.

\begin{defi}
	Let $\TT(\nn,\Z)$ be the $\Z$-subalgebra of $\End_{\Z}(\Harm_{0}(\nn,\Z))$ generated by the Hecke operators $(T_\pp)_{\pp \lhd A}$. This commutative algebra is called the \emph{Hecke algebra for cuspidal harmonic cochains}.
\end{defi}

We recall the action of Hecke operators on the Fourier coefficient with respect to the trivial ideal (see also Proposition~\ref{prop-connupairingharm}).
\begin{prop}[{\cite[3.13, 3.17]{GekAnalytical}}]\label{prop-C1TMF}
\begin{enumerate}
	\item 	For any $F\in \Harm_{0}(\nn,\C)$ we have 
	\begin{equation}\label{eq-C1TPF}
\forall \pp \lhd A,\quad c_{1}(T_\pp F) = q^{\deg \pp} c_{\pp}(F).
		\end{equation}
	\item The pairing 
	$$\begin{array}{ccl}
		\TT(\nn,\Z) \otimes_{\Z} \Q \times \Harm_0(\nn,\Q) & \longrightarrow & \Q \\
		(\theta,F) & \longmapsto & c_1(\theta F)
	\end{array}$$
	is perfect over $\Q$.
\end{enumerate}
\end{prop}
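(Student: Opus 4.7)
For assertion~(1), the plan is a direct computation starting from the definition of the Fourier coefficient for the trivial ideal:
$$c_1(T_\pp F) = q^{-1}\sum_{u \in \pi\Oinf/\pi^2\Oinf}(T_\pp F)\!\left(\begin{pmatrix}\pi^2&u\\0&1\end{pmatrix}\right)\eta(-u).$$
Unfolding $T_\pp F$ via its definition and swapping sums yields a double sum over $u$ and the matrices $\smallpmatrix{a}{b}{0}{d}\in S_\pp$. The product $\smallpmatrix{a}{b}{0}{d}\smallpmatrix{\pi^2}{u}{0}{1}=\smallpmatrix{a\pi^2}{au+b}{0}{d}$ must then be normalized back to a representative of the form $\smallpmatrix{\pi^k}{u'}{0}{1}$ modulo the right action of $\Kinf\et\Iwa$: this rescales by $d^{-1}$ and sets $k=2+\deg a-\deg d$. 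Performing the change of variable $u'=(au+b)/d$, the double sum over $(u,b)$—with $u$ running through $\pi\Oinf/\pi^2\Oinf$ and $b$ through residues of degree strictly less than $\deg d$—re-indexes as a single sum over $u'\in\pi\Oinf/\pi^{2+\deg\pp}\Oinf$, thanks to the relation $ad=P$. Tracking the character through the substitution converts $\eta(-u)$ into $\eta(-Pu')$, which is exactly the character appearing in the definition of $c_\pp(F)$, while the normalizations assemble into the overall factor $q^{\deg\pp}$.

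For assertion~(2), I would invoke~(1), the injectivity of the Fourier expansion (Proposition~\ref{prop-gekDvtfour}\ref{item-coch-Fexp}), and the commutativity of $\TT(\nn,\Z)$. For the right kernel: if $F\in\Harm_0(\nn,\Q)$ satisfies $c_1(\theta F)=0$ for every $\theta$, then choosing $\theta=T_\pp$ yields $c_\pp(F)=q^{-\deg\pp}c_1(T_\pp F)=0$ for every ideal $\pp\lhd A$, while $\theta=\id$ gives $c_1(F)=0$; hence all Fourier coefficients of $F$ vanish and $F=0$. For the left kernel: if $c_1(\theta F)=0$ for every $F\in\Harm_0(\nn,\Q)$, then substituting $T_\pp F$ for $F$ and using the commutation $\theta T_\pp = T_\pp\theta$ together with~(1) gives
$$q^{\deg\pp}c_\pp(\theta F)=c_1(T_\pp\theta F)=c_1(\theta T_\pp F)=0$$
for every ideal $\pp$, and $c_1(\theta F)=0$ by hypothesis. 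Every Fourier coefficient of $\theta F$ thus vanishes, so $\theta F=0$ for every $F$, i.e.\ $\theta=0$ in $\End_\Q(\Harm_0(\nn,\Q))$, hence in $\TT(\nn,\Z)\otimes_\Z\Q$. Equality of ranks then implies perfectness.

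The main obstacle lies in the combinatorics of~(1): the double sum over $(u,b)$, together with the arithmetic constraint $(a)+\nn=A$ built into $S_\pp$, has to be matched to the single sum over $u'\in\pi\Oinf/\pi^{2+\deg\pp}\Oinf$. This matching requires care when $\pp$ shares prime factors with $\nn$, since the allowed decompositions $ad=P$ are restricted accordingly; once the bijection between the index sets is in place, the normalization check producing $q^{\deg\pp}$ and the character identity $\eta(-u)=\eta(-Pu')$ under the substitution are straightforward.
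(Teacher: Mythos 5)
The paper itself gives no proof of this proposition; it is quoted from Gekeler \cite[(3.13), Thm.~3.17]{GekAnalytical}, so your proposal has to be measured against the standard argument there. Your part~(2) is correct and is essentially that argument: non-degeneracy on each side follows formally from~(1), the injectivity of the Fourier expansion (Proposition~\ref{prop-gekDvtfour}/\ref{item-coch-Fexp}) and the commutativity of the Hecke operators, and over the field $\Q$ non-degeneracy of a pairing of finite-dimensional spaces already forces equality of dimensions and hence perfectness, so that last step is harmless (and not circular, even though the paper later deduces $\rk_\Z\TT(\nn,\Z)=g(\nn)$ from this very statement).

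Part~(1), however, has a genuine gap. After unfolding, $c_1(T_\pp F)$ is a sum over \emph{all} matrices $\smallpmatrix{a}{b}{0}{d}\in S_\pp$, i.e.\ over all admissible factorizations $ad=P$, and the edge $\smallpmatrix{a}{b}{0}{d}\smallpmatrix{\pi^2}{u}{0}{1}$ is represented by $\smallpmatrix{\pi^{2+\deg d-\deg a}}{(au+b)/d}{0}{1}$: since $v_\pi(a)=-\deg a$, the exponent is $2+\deg d-\deg a$, not $2+\deg a-\deg d$ as you wrote. Your claimed re-indexing of the double sum over $(u,b)$ by a single variable $u'\in\pi\Oinf/\pi^{2+\deg\pp}\Oinf$ can only apply to the one summand with $a=1$, $d=P$; for every other factorization the index set has $q^{1+\deg d}<q^{1+\deg\pp}$ elements and the level $k=2+\deg d-\deg a$ is strictly smaller than $2+\deg\pp$, so no such bijection exists. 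The real content of Gekeler's (3.13) is precisely that these remaining terms contribute $0$ to $c_1(T_\pp F)$: this is not a formal change of variables but requires inserting the Fourier expansion of Proposition~\ref{prop-gekDvtfour}/\ref{item-coch-Fexp} (which in particular kills the terms with $\deg a>\deg d$, where $k\leq 1$ and the expansion is empty for a cuspidal cochain) together with an orthogonality computation for the additive character $\eta$ in the sums over $b$ and $u$ to dispose of the terms with $0<\deg a\leq\deg d$. Your closing paragraph locates the difficulty in the case where $\pp$ shares factors with $\nn$, but the problem above is already present for $\pp$ prime to $\nn$; as written, the proposed route does not close.
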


The structure of the Hecke algebra as a $\Z$-module is well-known.
\begin{lemm}\label{lem-rankHeckeZ}
The Hecke algebra $\TT(\nn,\Z)$ is a free $\Z$-module of rank $g(\nn)$.
\end{lemm}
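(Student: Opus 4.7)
The plan is to extract freeness and rank separately, using ambient structure for freeness and the perfect rational pairing (Proposition~\ref{prop-C1TMF}) for the rank.

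First, I would observe that by definition $\TT(\nn,\Z)$ sits inside $\End_{\Z}(\Harm_0(\nn,\Z))$. Since $\Harm_0(\nn,\Z)$ is a finitely generated free $\Z$-module of rank $g(\nn)$ (as recalled in Section~\ref{sssection-hcoch}), its endomorphism ring $\End_{\Z}(\Harm_0(\nn,\Z))$ is a finitely generated free $\Z$-module of rank $g(\nn)^2$. As $\Z$ is a principal ideal domain, every $\Z$-submodule of a free $\Z$-module of finite rank is itself free of finite rank. Thus $\TT(\nn,\Z)$ is a free $\Z$-module of some finite rank $r$.

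Next I would compute $r$ by tensoring with $\Q$. Since $\TT(\nn,\Z)$ is torsion-free, the natural map $\TT(\nn,\Z) \otimes_{\Z} \Q \hookrightarrow \End_{\Q}(\Harm_0(\nn,\Q))$ is injective, and its image is still the $\Q$-subalgebra generated by the Hecke operators. Invoking the second part of Proposition~\ref{prop-C1TMF}, the pairing $(\theta,F) \mapsto c_1(\theta F)$ on $\TT(\nn,\Z)\otimes_{\Z}\Q \times \Harm_0(\nn,\Q)$ is perfect over~$\Q$. Perfectness forces $\dim_{\Q} \TT(\nn,\Z)\otimes_{\Z}\Q = \dim_{\Q}\Harm_0(\nn,\Q) = g(\nn)$, hence $r = g(\nn)$.

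There is no real obstacle here: freeness is automatic over the PID~$\Z$, and the rank computation is an immediate consequence of the already-stated perfectness of the rational pairing. The only step requiring a moment of care is checking that the pairing over $\Q$ from Proposition~\ref{prop-C1TMF} genuinely identifies $\TT(\nn,\Z)\otimes_{\Z}\Q$ with the $\Q$-dual of $\Harm_0(\nn,\Q)$, which is just the content of the perfectness statement.
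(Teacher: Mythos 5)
Your proposal is correct and follows essentially the same route as the paper: freeness from $\TT(\nn,\Z)$ being a submodule of the finite-rank free $\Z$-module $\End_{\Z}(\Harm_0(\nn,\Z))$, and the rank computed via the perfect $\Q$-pairing of Proposition~\ref{prop-C1TMF}. Your write-up merely spells out the PID argument and the injectivity after tensoring with $\Q$ in slightly more detail.
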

\begin{proof}
The Hecke algebra  $\TT(\nn,\Z)$ is a submodule of $\End_{\Z}(\Harm_0(\nn,\Z))$ where $\Harm_{0}(\nn,\Z)$ is free of finite rank over $\Z$. Therefore the $\Z$-module $\TT(\nn,\Z)$ is free of finite rank. The perfect $\Q$-pairing from Proposition~\ref{prop-C1TMF} gives $ \dim_{\Q} \TT(\nn,\Z)\otimes_{\Z}\Q = \dim_{\Q} \Harm_0(\nn,\Q)=g(\nn)$. We conclude that $\rk_{\Z} \TT(\nn,\Z)=g(n)$.
\end{proof}

\subsection{Comparison isomorphisms}\label{ssection-comparisonisom}

Following Teitelbaum \cite{TeitPoisson} and Gekeler--Reversat \cite{GR}, there are canonical isomorphisms between certain $\Cinf$-valued harmonic cochains and Drinfeld modular forms of weight $2$ and type $1$. We recall the main statements here. Our reference is \cite[Sections~1, 5, 6]{GR}.

The \emph{building map}  $\lambda : \Omega \to \BT(\R)$ is a $\GL_2(\Kinf)$-equivariant canonical map where $\BT(\R)$ is the real realization of the Bruhat-Tits tree $\BT$. For any edge $e \in Y(\BT)$, the inverse image $\lambda^{-1}(e)$ is an oriented annulus isomorphic with $\{ z \in \Cinf \mid |\pi| \leq |z| \leq 1 \}$ in $\Cinf$.

Let $f\in M_{2,1}^{0,0}(\nn)$ be a doubly cuspidal Drinfeld modular form of weight $2$ and type~$1$. Then $f(z)dz$ is a holomorphic differential on $\Omega$ so  for any edge $e\in Y(\BT)$ it has a residue in the oriented annulus $\lambda^{-1}(e)$, denoted by $\res_{e}(f(z)dz)$. This provides the \emph{residue map}
$$\begin{array}{rrcl}
\res\, f: & E(\BT) &\longrightarrow &\Cinf  \\
&e &\longmapsto &\res_{e} (f(z)dz)
\end{array}	$$
which turns out to be a $\Cinf$-valued $\Gamma_0(\nn)$-invariant harmonic cochain.

\begin{defi}
Let $M_{2,1}^{0,0}(\nn,\Fp)$ be the subspace of $M_{2,1}^{0,0}(\nn,\Cinf)$ consisting of Drinfeld modular forms who have all their residues in $\Fp$.\end{defi}

This subspace is stable under all Hecke operators. Let $\TT(\nn,\Fp)$ be the commutative $\Fp$-subalgebra of $\End_{\Fp}(M_{2,1}^{0,0}(\nn,\Fp))$ generated by $(T_\pp)_{\pp \lhd A}$.

\begin{theo}[Teitelbaum, Gekeler--Reversat -- see {\cite[3.6.1 and 6.5.3]{GR}}]\label{theo-GRisom} 
The residue map induces isomorphisms of vector spaces over $\Cinf$ (resp. $\Fp$):
\begin{align*}
&\res_{\Cinf} : M_{2,1}^{0,0}(\nn) \xrightarrow{\sim} \Harm_{0,0}(\nn,\Cinf) \\
&\res_{\Fp} : M_{2,1}^{0,0}(\nn,\Fp) \xrightarrow{\sim} \Harm_{0,0}(\nn,\Fp)
\end{align*}
which are $\GL_2(\Kinf)$-equivariant and compatible with Hecke operators.
\end{theo}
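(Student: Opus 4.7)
The plan is to show the residue map is a well-defined injection from $M_{2,1}^{0,0}(\nn)$ into $\Harm_0(\nn, \Cinf)$, exhibit an explicit inverse via the Gekeler--Reversat analytic theta-function construction, and then transfer the statement to characteristic $p$. The starting observation is that since $f \in M_{2,1}^{0,0}(\nn)$ has weight~$2$ and type~$1$, the differential $f(z)\,dz$ is $\Gamma_0(\nn)$-invariant on $\Omega$: under $\gamma = \smallpmatrix{a}{b}{c}{d} \in \Gamma_0(\nn)$, the factor $(cz+d)^{2}$ cancels the Jacobian $d(\gamma z) = (\det\gamma)(cz+d)^{-2}\,dz$, with the type-$1$ factor $(\det\gamma)^{-1}$ absorbing the determinantal term.

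First I would verify that $\res f$ is a $\Gamma_0(\nn)$-invariant harmonic cochain with finite support on $\Gamma_0(\nn)\bs\BT$. Antisymmetry $\res_{e} f + \res_{\overline{e}} f = 0$ comes from orientation reversal on the annulus $\lambda^{-1}(e)$, and $\Gamma_0(\nn)$-invariance of $\res f$ follows from that of $f(z)\,dz$ together with the $\GL_2(\Kinf)$-equivariance of $\lambda$. Harmonicity at a vertex $v$ is the rigid-analytic residue theorem on the affinoid $\lambda^{-1}(v)$, whose boundary components are exactly the annuli $\lambda^{-1}(e)$ with $o(e)=v$. Finite support reflects double cuspidality: at each cusp, the residues along the corresponding half-line of $\Gamma_0(\nn)\bs\BT$ are expressed in terms of coefficients of the local $t$-expansion of $f$, and the vanishing of the two leading coefficients forces eventual vanishing along the half-line.

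Next I would construct the inverse. Given $F \in \Harm_0(\nn,\Z)$, Gekeler--Reversat produce a multiplicative theta function $\theta_F$ on $\Omega$ as a convergent infinite product indexed by $\Gamma_0(\nn)$ whose divisor encodes $F$; its logarithmic derivative $d\log \theta_F = f_F(z)\,dz$ yields a form $f_F \in M_{2,1}^{0,0}(\nn)$ with $\res f_F = F$ up to a fixed normalization. The main obstacle is exactly this analytic part: establishing convergence of the product, the weight-$2$ type-$1$ transformation law of $f_F$, and the explicit residue computation. These constitute the core of the Mumford--Drinfeld uniformization carried out in \cite[Sections~5--6]{GR}. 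Combined with the dimension equality $\dim_{\Cinf} M_{2,1}^{0,0}(\nn) = g(\nn) = \rk_{\Z} \Harm_0(\nn,\Z)$ and the injectivity of $\res_{\Cinf}$, this yields the first isomorphism onto $\Harm_{0,0}(\nn,\Cinf) = \Harm_0(\nn,\Z) \otimes_{\Z} \Cinf$.

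Finally, $\GL_2(\Kinf)$-equivariance of $\res_{\Cinf}$ is inherited from that of $\lambda$, and compatibility with Hecke operators is formal since both actions are defined as $\Cinf$-linear sums over the same coset representatives in $S_\pp$. The $\Fp$-version is then extracted from the $\Cinf$-statement: $M_{2,1}^{0,0}(\nn,\Fp)$ is by definition the subspace whose residues lie in $\Fp$, and $\Harm_{0,0}(\nn,\Fp) = \Harm_0(\nn,\Z) \otimes_{\Z} \Fp$, so the restriction of $\res_{\Cinf}$ provides the desired $\Fp$-linear, Hecke-equivariant bijection $\res_{\Fp}$.
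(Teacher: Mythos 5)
The paper does not prove this statement: it is imported wholesale from Gekeler--Reversat, with the citation \cite[3.6.1 and 6.5.3]{GR} standing in for the argument. Your sketch is a faithful outline of what that reference actually does, and the components you isolate are the right ones: invariance of $f(z)\,dz$ for weight $2$ and type $1$, harmonicity via the residue theorem on the affinoids $\lambda^{-1}(v)$, cuspidality of $\res f$ from double cuspidality, and surjectivity onto $\Harm_{0,0}$ (rather than all of $\Harm_0$) coming precisely from the fact that the theta-function construction produces a preimage for every \emph{integral} harmonic cochain. You correctly flag the analytic core --- convergence of the theta products, their transformation law, and the residue computation --- as the substance of \cite[Sections~5--6]{GR} rather than something you re-derive.

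Two points are glossed over more than they should be, though neither is a structural gap. First, you invoke injectivity of $\res_{\Cinf}$ without indicating where it comes from; in \cite{GR} it is part of the identification of $M_{2,1}^{0,0}(\nn)$ with holomorphic differentials on the Mumford curve $X_0(\nn)$, and it is not free. Second, Hecke compatibility is not quite ``formal'': the operator on forms carries the normalization $\frac{1}{P}\sum a^k f\bigl(\frac{az+b}{d}\bigr)$ while the operator on cochains carries no such factors, so one must check that for $(k,m)=(2,1)$ the factor $\frac{1}{P}a^{2}$ combines with the Jacobian $\frac{a}{d}$ of $z\mapsto\frac{az+b}{d}$ and $ad=P$ to give exactly $\sum_\gamma \gamma^*(f\,dz)$; this is the computation behind the paper's remark that its normalization agrees with the standard one precisely in weight $2$, type $1$.
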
	

This is illustrated by the following commutative diagram in which the vertical maps are injective and come from $\Fp \hookrightarrow \Cinf$:
\begin{equation}\label{eq-diagcomm}
\begin{tikzcd}
M_{2,1}^{0,0}(\nn,\Fp) \arrow[r, "\res_{\Fp}", "\simeq"'] \arrow[d,]
& \Harm_{0,0}(\nn,\Fp) \arrow[d] \\
M_{2,1}^{0,0}(\nn) \arrow[r, "\res_{\Cinf}","\simeq"' ]
& \Harm_{0,0}(\nn,\Cinf).
\end{tikzcd}
\end{equation}
In particular, $M_{2,1}^{0,0}(\nn,\Fp)$ defines a canonical $\Fp$-structure on the $\Cinf$-space of Drinfeld modular forms $M_{2,1}^{0,0}(\nn)$. 
From Theorem~\ref{theo-GRisom} and Lemma~\ref{lem-rankHeckeZ}, we get:
\begin{coro}\label{cor-isomalghecke}We have canonical isomorphisms as vector spaces over $\Cinf$ (resp.~$\Fp$):
$$\TT(\nn,\Cinf) \simeq \TT(\nn,\Z)\otimes_{\Z} \Cinf \quad\text{and}\quad \TT(\nn,\Fp) \simeq \TT(\nn,\Z)\otimes_{\Z} \Fp$$
and they have dimension $g(\nn)$.
\end{coro}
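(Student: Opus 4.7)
The plan is to deduce both isomorphisms directly from Theorem~\ref{theo-GRisom} and Lemma~\ref{lem-rankHeckeZ}. For $R\in\{\Cinf,\Fp\}$, Theorem~\ref{theo-GRisom} gives a Hecke-equivariant $R$-linear isomorphism $M_{2,1}^{0,0}(\nn,R)\simeq \Harm_{0,0}(\nn,R)$ (with $M_{2,1}^{0,0}(\nn,\Cinf):=M_{2,1}^{0,0}(\nn)$), and by definition $\Harm_{0,0}(\nn,R)\simeq\Harm_0(\nn,\Z)\otimes_\Z R$. Transporting the Hecke action along this isomorphism, the $\TT(\nn,\Z)$-action on $\Harm_0(\nn,\Z)$ extends $R$-linearly to an action of $\TT(\nn,\Z)\otimes_\Z R$ on $\Harm_0(\nn,\Z)\otimes_\Z R$, and since $\TT(\nn,R)$ is the $R$-subalgebra of $\End_R(\Harm_0(\nn,\Z)\otimes_\Z R)$ generated by the very same operators $T_\pp$, there is a canonical surjective $R$-algebra homomorphism
$$\phi_R : \TT(\nn,\Z)\otimes_\Z R\twoheadrightarrow \TT(\nn,R).$$
By Lemma~\ref{lem-rankHeckeZ}, the source has $R$-dimension $g(\nn)$, so both the isomorphism and dimension claims reduce to showing that $\phi_R$ is injective.

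For $R=\Cinf$, injectivity is immediate from flatness. Since $\Cinf$ is a $\Q$-algebra, it is torsion-free and therefore flat over $\Z$. Tensoring the defining embedding $\TT(\nn,\Z)\hookrightarrow \End_\Z(\Harm_0(\nn,\Z))$ with $\Cinf$ preserves injectivity, producing
$$\TT(\nn,\Z)\otimes_\Z\Cinf\hookrightarrow \End_\Z(\Harm_0(\nn,\Z))\otimes_\Z\Cinf\simeq \End_{\Cinf}(\Harm_{0,0}(\nn,\Cinf)),$$
and the image is by construction $\TT(\nn,\Cinf)$. So $\phi_{\Cinf}$ is an isomorphism and both sides have $\Cinf$-dimension $g(\nn)$.

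The main obstacle is the $\Fp$ case, where $\Fp$ is not $\Z$-flat so the argument above breaks. Injectivity of $\phi_{\Fp}$ is equivalent to the $p$-saturation of $\TT(\nn,\Z)$ inside $\End_\Z(\Harm_0(\nn,\Z))$: namely, that for any $S\in\End_\Z(\Harm_0(\nn,\Z))$ with $pS\in\TT(\nn,\Z)$ we in fact have $S\in\TT(\nn,\Z)$. I would aim to establish this by combining the integrality of the Hecke operators $T_\pp$ on the free $\Z$-module $\Harm_0(\nn,\Z)$ with the perfect $\Q$-pairing of Proposition~\ref{prop-C1TMF}, which identifies $\TT(\nn,\Z)\otimes_\Z\Q$ with the $\Q$-linear dual of $\Harm_0(\nn,\Q)$ and thereby forces $\TT(\nn,\Z)$ to behave like a $\Z$-order whose divisibility properties are controlled by those of $\Harm_0(\nn,\Z)$. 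Once saturation is secured, $\phi_{\Fp}$ is an isomorphism and the $\Fp$-dimension is $g(\nn)$, completing the corollary.
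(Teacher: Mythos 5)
Your treatment of the $\Cinf$ case is correct and is the expected flatness argument, and your reduction of the $\Fp$ case to the $p$-saturation of $\TT(\nn,\Z)$ inside $\End_{\Z}(\Harm_0(\nn,\Z))$ is also correct. The gap is in how you propose to prove that saturation. The tool you invoke --- the perfect $\Q$-pairing of Proposition~\ref{prop-C1TMF} --- cannot deliver it: by Proposition~\ref{prop-connupairingharm} that pairing is only perfect after inverting $p$, so it gives no control on divisibility by $p$, which is exactly what saturation is about. Indeed, the whole point of the paper is that the induced mod-$p$ pairing $\pairing_{\HC,\Fp}$ \emph{does} degenerate, so nothing resembling ``divisibility properties of $\TT(\nn,\Z)$ controlled by those of $\Harm_0(\nn,\Z)$'' can be extracted from the $\Q$-duality. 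As written, the $\Fp$ half of the corollary (both the isomorphism and the dimension count) is not proved.

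The missing idea is to use the fact, recorded after diagram~\eqref{eq-diagcomm}, that $M_{2,1}^{0,0}(\nn,\Fp)$ is an $\Fp$-structure on $M_{2,1}^{0,0}(\nn)$ stable under all $T_\pp$. This gives $\End_{\Fp}(M_{2,1}^{0,0}(\nn,\Fp))\otimes_{\Fp}\Cinf\simeq\End_{\Cinf}(M_{2,1}^{0,0}(\nn))$, and since forming the subalgebra generated by a fixed family of operators commutes with the (flat) field extension $\Cinf/\Fp$, one obtains $\TT(\nn,\Fp)\otimes_{\Fp}\Cinf\simeq\TT(\nn,\Cinf)$. Hence $\dim_{\Fp}\TT(\nn,\Fp)=\dim_{\Cinf}\TT(\nn,\Cinf)=g(\nn)$, the latter by your $\Cinf$ argument together with Lemma~\ref{lem-rankHeckeZ}. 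Your canonical surjection $\phi_{\Fp}:\TT(\nn,\Z)\otimes_{\Z}\Fp\twoheadrightarrow\TT(\nn,\Fp)$ is then a surjection between $\Fp$-vector spaces of the same finite dimension $g(\nn)$, hence an isomorphism; the saturation you were trying to prove directly comes out as a by-product rather than as an input.
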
	
We will now use these as identifications and simply write $\TT (\nn,\Cinf)$ for the first Hecke algebra and $\TT(\nn,\Fp)$ for the second one.	

\section{Pairings between modular forms and Hecke algebra}\label{sec-pairings}

\subsection{Pairing for Drinfeld modular forms}\label{ssec-pairingDMF}

We consider doubly cuspidal Drinfeld modular forms of weight $k$ with type $m \in \{0,1\}$, so that the first coefficient of their $t$-expansion is $b_1$ (see Remark~\ref{rema-DMF}). By analogy with classical modular forms, we have the $\Cinf$-bilinear map already studied in \cite{ArmCoeffDMF}:
\begin{equation}\label{eq-pairingDMFgeneral}
	\begin{array}{ccl}
\TT_{k,m}(\nn,\Cinf) \times M_{k,m}^{0,0}(\nn) & \longrightarrow & \Cinf \\
 (\theta,f) & \longmapsto & b_1(\theta f).
\end{array}
\end{equation}

When $(k,m)=(2,1)$, we denote this pairing by 
$$\pairing_{\DMF} : \TT(\nn,\Cinf) \times M_{2,1}^{0,0}(\nn)\longrightarrow \Cinf.$$ 
Note that we have $\dim_{\Cinf} \TT(\nn,\Cinf) = g(\nn) = \dim_{\Cinf} M_{2,1}^{0,0}(\nn)$ by Corollary~\ref{cor-isomalghecke}.

\medskip
Let us recall known instances of spaces of small dimension where this pairing is perfect. They follow from considerations on the action of the Hecke operators $(T_\pp)_{\deg \pp=1}$ on the $t$-expansion.
\begin{prop}[{\cite[Thm.~7.7]{ArmCoeffDMF}}]\label{prop-pairingDMF-casparfait}
The pairing \eqref{eq-pairingDMFgeneral} is perfect for the following spaces of Drinfeld modular forms and their corresponding Hecke algebras:
\begin{enumerate}
	\item $M_{k,1}^{0,0}(\GL_2(A))$ when $k<q^2(q+1)$,
	\item $M_{2,1}^{0,0}(\nn)$ when $\nn$ is any prime of degree~$3$.
\end{enumerate}
\end{prop}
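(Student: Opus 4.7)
The plan is to show that the right kernel of the pairing \eqref{eq-pairingDMFgeneral} is trivial in each of the two listed settings, which combined with a matching dimension count will yield perfectness. For case~(2) the dimension equality $\dim_{\Cinf} \TT(\nn,\Cinf) = \dim_{\Cinf} M_{2,1}^{0,0}(\nn) = g(\nn)$ is already recorded in Corollary~\ref{cor-isomalghecke}; in case~(1) one invokes the analogous equality for full level and weight $k$, which follows from the explicit dimension formulas for $M_{k,1}^{0,0}(\GL_2(A))$.

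The core tool is an explicit description, on $t$-expansions, of the degree-one Hecke operators $T_\pp$ with $\pp=(T-\alpha)$ and $\alpha\in\Fq$. A direct manipulation of the defining sum $T_\pp f(z) = P^{-1}\sum a^k f((az+b)/d)$, followed by re-expanding in powers of $t$, produces an identity expressing $b_1(T_\pp f)$ as a $\Cinf$-linear combination of the coefficients $b_j(f)$ for $j$ in some explicit small range, with coefficients that are polynomials in $\alpha$. Letting $\alpha$ vary over $\Fq$ and iterating with products $T_{\pp_1}\cdots T_{\pp_r}$ of degree-one operators, elementary linear algebra on the resulting Vandermonde-type systems yields formulas of the form $b_j(f) = \sum_\theta \mu_\theta\, b_1(\theta f)$ for every $j$ below a certain extractable threshold, with $\mu_\theta \in \Cinf$ and $\theta$ in the Hecke algebra.

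With this extraction mechanism in hand, each case is handled by matching the extractable range to the space under consideration. In case~(1), the dimension of $M_{k,1}^{0,0}(\GL_2(A))$ is controlled by the weight, and a doubly cuspidal full-level form is uniquely determined by its first few $b_j$; the hypothesis $k<q^2(q+1)$ is precisely what guarantees that all determining indices lie within the extractable range. In case~(2), one computes $g(\nn)$ for a prime $\nn$ of degree~$3$ from the explicit formulas, observes that the genus is small enough, and verifies that the relevant $b_j(f)$ are all captured by the extraction. In both cases, the hypothesis that $b_1(\theta f)=0$ for every $\theta$ forces every extractable $b_j(f)$ to vanish, hence $f=0$, so the right kernel is trivial and the pairing is perfect.

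The main obstacle is the Hecke-extraction step. Unlike the classical identity $a_n(f)=a_1(T_n f)$, which recovers each coefficient from a single Hecke operator, the Drinfeld analogue only expresses $b_1(T_\pp f)$ as a \emph{combination} of several $b_j(f)$, and the maximal index $j$ reachable by iterating degree-one Hecke operators is bounded. Tracking precisely which indices fall inside the reachable range, and verifying non-degeneracy of the corresponding $\alpha$-systems (where Vandermonde determinants must not vanish), is technically delicate; the bounds $k<q^2(q+1)$ and $\deg\nn=3$ are exactly the thresholds at which the argument just suffices, and pushing past them is precisely what fails in general, motivating the obstructions studied throughout the rest of the paper.
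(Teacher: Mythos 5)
Your approach matches the paper's: this proposition is not proved in the text but quoted from \cite[Thm.~7.7]{ArmCoeffDMF}, and the paper's only indication of its proof is that both cases ``follow from considerations on the action of the Hecke operators $(T_\pp)_{\deg \pp=1}$ on the $t$-expansion'' --- precisely the extraction mechanism you describe. Your sketch (express $b_1(T_\pp f)$ for degree-one $\pp=(T-\alpha)$ in terms of finitely many $b_j(f)$, invert the Vandermonde-type systems as $\alpha$ ranges over $\Fq$ and iterate to recover all $b_j(f)$ in a bounded range, then check that this range determines a form in each of the two spaces via a dimension count) is essentially the argument of the cited reference, with the expected caveat that the genuinely technical content lies in the explicit Hecke formulas and threshold bookkeeping you have only outlined.
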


\medskip
In the opposite direction, we have previously exhibited families of elements in the left-kernel of the pairing~\eqref{eq-pairingDMFgeneral}. To state them, we need the Carlitz module $C : A \to A[x]$ which is the morphism of $\Fq$-algebras defined by $C(T) =Tx+x^q$. For all $a\in A$, the image of $a$ under $C$ is the $\Fq$-linear polynomial
$$C(a) = \sum_{k=0}^{\deg a} C_{a,k} x^{q^k}$$
with coefficients $C_{a,k} \in A$ and $C_{a,0}=a$. Moreover if $a \in A_{+}$, we have $C_{a,\deg a}=1$.

\begin{theo}[{\cite[Thm~1.1]{ArmCoeffDMF}}]\label{thm-coeffDMF} Let $m\in\{0,1\}$. The left-kernel of the pairing~\eqref{eq-pairingDMFgeneral} contains the following elements of the Hecke algebra:
\begin{enumerate}
\item $\displaystyle\sum_{P \in A_{+1}} P^{1-m} T_P + T_1$,
\item $\displaystyle\sum_{P\in A_{+d}} C_{P,0}^{i_0} \cdots C_{P,d-1}^{i_{d-1}} T_P$ for any $d\geq 1$ and any $d$-tuple $(i_0,\ldots,i_{d-1})$ of non-negative integers satisfying
\begin{align*}
&\forall j \in \{0,\ldots,d-1\}, \quad 0 \leq i_j \leq q-m \\
\text{and}\quad &i_0 +\cdots + i_{d-1} \leq (d-1)(q-1)-m.
\end{align*}
\item $\displaystyle\sum_{P \in A_{+d}} P^\ell T_P$ for any integer $\ell$ with $0 \leq \ell\leq q-m$ and $d \geq 1 + (\ell+m)/(q-1)$,
\item $\displaystyle\sum_{P \in A_{+d}} T_P$ if $d\geq 2$ or if $(d,m)=(1,0)$.
\end{enumerate}	
\end{theo}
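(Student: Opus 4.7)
The strategy is to first derive an explicit formula for the coefficient $b_1(T_\pp f)$ as a linear combination of the $t$-expansion coefficients $(b_j(f))_{j \geq 1}$ of $f$, and then reduce each of the four kernel identities to a Carlitz-type power sum vanishing. For the formula, I would compute $(T_\pp f)(z)$ by expanding each term $a^k f\bigl((az+b)/d\bigr)$, with $\smallpmatrix{a}{b}{0}{d} \in S_\pp$, as a power series in $t = t(z)$. The uniformizer $t$ and the Carlitz exponential $e_C$ are linked by $\picarlitz / t(z) = e_C(z)$ together with the Carlitz-module identity $e_C(az) = \sum_{k=0}^{\deg a} C_{a,k}\, e_C(z)^{q^k}$, which allows one to expand $t\bigl((az+b)/d\bigr)$ as a power series in $t(z)$ whose coefficients involve the $C_{a,k}$, the matrix entries, and the additive character evaluated at $b/d$. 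Summing over $S_\pp$ and extracting the coefficient of $t^{m+(q-1)}$ yields a formula of the shape
\begin{equation*}
b_1(T_\pp f) \;=\; \sum_{j \geq 1} \alpha_j(P)\, b_j(f),
\end{equation*}
with each $\alpha_j(P) \in A$ an explicit polynomial in $C_{P,0} = P, C_{P,1}, \ldots, C_{P, \deg P} = 1$.

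With this formula in hand, by bilinearity any element $\theta = \sum_{P \in A_{+d}} \lambda(P)\, T_P$ of the Hecke algebra satisfies
\begin{equation*}
b_1(\theta f) \;=\; \sum_{j \geq 1} \Bigl( \sum_{P \in A_{+d}} \lambda(P)\, \alpha_j(P) \Bigr) b_j(f),
\end{equation*}
so membership of $\theta$ in the left-kernel of~\eqref{eq-pairingDMFgeneral} reduces to the vanishing, for every $j \geq 1$, of the inner sum. For the weights $\lambda(P)$ appearing in items~(1)--(4), each inner sum then unfolds into a Carlitz-type power sum
\begin{equation*}
\sum_{P \in A_{+d}} C_{P,0}^{i_0}\, C_{P,1}^{i_1} \cdots C_{P,d-1}^{i_{d-1}}
\end{equation*}
indexed by tuples $(i_0, \ldots, i_{d-1})$ satisfying the degree constraints of item~(2).

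The final input is the vanishing of these power sums under the hypotheses $0 \leq i_\ell \leq q-m$ and $i_0 + \cdots + i_{d-1} \leq (d-1)(q-1) - m$, a classical identity on monic polynomials of fixed degree, in the spirit of Carlitz's sum-of-powers relations. Items~(3) and~(4) follow by specialization of~(2): item~(3) is the case where only $i_0 = \ell$ is nonzero (noting $C_{P,0} = P$), with the degree inequality rearranging to $d \geq 1 + (\ell+m)/(q-1)$; item~(4) is the further case $\ell = 0$. Item~(1), concerning $d = 1$, is a direct low-degree computation: there is a single Carlitz coefficient $C_{P,0} = P$, the term $T_1 = \id$ contributes exactly $b_1(f)$, and the specific weight $P^{1-m}$ is chosen so that the $j = 1$ sum cancels this contribution while the $j \geq 2$ sums vanish by the same power sum identities.

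The main obstacle is the first step: producing the formula for $b_1(T_\pp f)$ with $\alpha_j(P)$ explicit enough to be summed against the weights of items~(1)--(4). The careful bookkeeping of Carlitz coefficients in the expansion of $t\bigl((az+b)/d\bigr)$, together with the identification of the resulting combinations with vanishing power sums on $A_{+d}$, is precisely the technical content carried out in~\cite{ArmCoeffDMF}.
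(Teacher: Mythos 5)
This theorem is not proved in the paper: it is imported verbatim from \cite{ArmCoeffDMF} (Thm.~1.1), so the only meaningful comparison is with the argument of that reference. Your outline does reproduce its architecture faithfully: establish an identity $b_1(T_\pp f)=\sum_j \alpha_j(P)\,b_j(f)$ with $\alpha_j(P)$ polynomial in the Carlitz coefficients $C_{P,k}$, then reduce each of items (1)--(4) to the vanishing of the multi-index power sums $\sum_{P\in A_{+d}} C_{P,0}^{i_0}\cdots C_{P,d-1}^{i_{d-1}}$ under the stated degree constraints. Your derivations of (3) from (2) (taking only $i_0=\ell$ nonzero, which turns $\ell\leq (d-1)(q-1)-m$ into $d\geq 1+(\ell+m)/(q-1)$), of (4) as the case $\ell=0$, and the role of $T_1=\id$ contributing $b_1(f)$ in item (1), are all correct and consistent with the statement.

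As a proof, however, the proposal has two genuine gaps, both of which you acknowledge but neither of which is routine. First, the expansion of $a^k f\bigl((az+b)/d\bigr)$ in powers of $t(z)$ is not a direct application of the Carlitz-module identity $e_C(az)=\sum_k C_{a,k}\,e_C(z)^{q^k}$: the division by $d$ forces one to work with uniformizers attached to the sublattice $dA$ and to resum over $b$ with $\deg b<\deg d$ using Goss polynomials (there is no additive character in this picture, contrary to what you write). This is precisely where the explicit form of the $\alpha_j(P)$ --- and the fact that only finitely many $j$ contribute for a fixed $\pp$, which your formula tacitly assumes --- comes from; without it the ``formula of the shape'' you invoke is not available, and the kernel conditions cannot be matched against the power sums of item (2). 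Second, the vanishing of those power sums under $0\leq i_j\leq q-m$ and $i_0+\cdots+i_{d-1}\leq (d-1)(q-1)-m$ is not a citation-free classical fact; it is itself one of the results established in \cite{ArmCoeffDMF}, building on Carlitz--Lee--Gekeler type vanishing of $\sum_{a\in A_{+d}}a^{k}$. Since both pivotal ingredients are deferred to the very reference whose theorem you are asked to prove, what you have written is an accurate road map of that reference rather than a proof.
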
	
\begin{rema}
Some of these statements are valid without assuming $m\in\{0,1\}$ (see \cite{ArmCoeffDMF}).
\end{rema}

\begin{coro}\label{coro-coeffDMF}
	The left-kernel of the pairing $\varphi_{\DMF}$ contains the elements:
$$\sum_{P\in A_{+\leq 1}} T_P \quad\text{and}\quad \sum_{P\in A_{+2}} P^\ell T_P \text{ for any }0\leq \ell \leq q-2.$$
\end{coro}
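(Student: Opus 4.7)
The plan is to deduce both families of left-kernel elements as direct specializations of Theorem~\ref{thm-coeffDMF} to the case $(k,m)=(2,1)$, since $\pairing_{\DMF}$ is by definition the pairing \eqref{eq-pairingDMFgeneral} in weight $2$ and type $1$.

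First, I would handle the element $\sum_{P\in A_{+\leq 1}} T_P$. Decompose the sum according to the degree:
$$\sum_{P\in A_{+\leq 1}} T_P = T_{1} + \sum_{P\in A_{+1}} T_P.$$
Now apply Theorem~\ref{thm-coeffDMF}(1) with $m=1$. Because the exponent $1-m$ vanishes, $P^{1-m}=1$ for every $P\in A_{+1}$, and the stated left-kernel element becomes exactly $\sum_{P\in A_{+1}} T_P + T_1$. This is precisely the desired element.

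Second, I would treat the elements $\sum_{P\in A_{+2}} P^\ell T_P$ with $0\leq \ell\leq q-2$ by applying Theorem~\ref{thm-coeffDMF}(3) with $m=1$ and $d=2$. The hypotheses of that statement require $0 \leq \ell \leq q-m = q-1$, which is satisfied, together with
$$d \geq 1 + \frac{\ell + m}{q-1} = 1 + \frac{\ell+1}{q-1}.$$
For $d=2$ this amounts to $\ell+1 \leq q-1$, i.e.\ $\ell \leq q-2$, which is exactly our hypothesis. The conclusion of Theorem~\ref{thm-coeffDMF}(3) then places $\sum_{P\in A_{+2}} P^\ell T_P$ in the left-kernel of $\pairing_{\DMF}$.

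There is no real obstacle here: the corollary is essentially a bookkeeping consequence of the general result, and no further computation is needed beyond checking that the numerical inequalities in Theorem~\ref{thm-coeffDMF} reduce to the stated range of $\ell$ when $(d,m)=(2,1)$.
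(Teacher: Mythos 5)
Your proposal is correct and matches the paper's (implicit) argument: the corollary is stated there without proof precisely because it is the specialization of Theorem~\ref{thm-coeffDMF}(1) and (3) to $(k,m)=(2,1)$, and your verification of the inequalities $\ell\leq q-2$ for $d=2$, $m=1$ is exactly the required bookkeeping.
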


When $(k,m)=(2,1)$, we will focus on the only element from Corollary~\ref{coro-coeffDMF} and Theorem~\ref{thm-coeffDMF} that involves Hecke operators of degree~$1$, namely $\sum_{P\in A_{+\leq 1}} T_P$. It is expected to be  non-zero as an endomorphism of Drinfeld modular forms in a quite general setting.
\begin{conj}[{\cite[Conj.~6.9]{ArmCoeffDMF}}]\label{conj-69} Suppose that $(k,m)=(2,1)$ and that the ideal $\nn$ is prime of degree~$\geq 5$. The element
	$$\displaystyle\sum_{\deg \pp \leq 1} T_\pp$$ is non-zero in $\TT(\nn,\Fp)$. Consequently the pairing $\pairing_{\DMF}$ is not perfect.
\end{conj}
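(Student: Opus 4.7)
The plan is to prove the conjecture by transferring the question to modular symbols over $K$. By Corollary~\ref{coro-coeffDMF}, the element $\somme := \sum_{\deg \pp \leq 1} T_\pp$ already lies in the left-kernel of $\pairing_{\DMF}$, so it suffices to establish its non-vanishing in $\TT(\nn,\Fp)$. Since $\TT(\nn,\Z)$ is free of rank $g(\nn)$ by Lemma~\ref{lem-rankHeckeZ}, non-vanishing modulo $p$ forces $\somme \neq 0$ in $\TT(\nn,\Cinf) \simeq \TT(\nn,\Z)\otimes_\Z\Cinf$, which yields a genuine witness to non-perfectness of $\pairing_{\DMF}$.

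To show $\somme \neq 0$ in $\TT(\nn,\Fp)$, I would produce an element of a faithful $\TT(\nn,\Fp)$-module on which $\somme$ acts non-trivially. The natural candidate is the $\Fp$-space $M_{2,1}^{0,0}(\nn,\Fp)$, which by Theorem~\ref{theo-GRisom} carries a canonical $\Fp$-structure via the residue map. Invoking the announced Hecke-equivariant isomorphism modulo $p$ (Lemma~\ref{lemm-isomcarp}) between $M_{2,1}^{0,0}(\nn,\Fp)$ and a suitable $\Fp$-space of modular symbols à la Teitelbaum, the problem reduces to: exhibit a modular symbol $\sigma$ with $\somme \cdot \sigma \neq 0$.

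The central computation would rely on Teitelbaum's finite presentation, the function-field analog of Manin's presentation, which expresses every modular symbol as an $\Fp$-linear combination of generators indexed by cosets of $\Gamma_0(\nn)$ (essentially parameterized by $\PP^1(A/\nn)$). I would pick $\sigma$ to be one such carefully chosen generator, then apply the explicit Heilbronn--Merel-type formula of Proposition~\ref{prop-formuleMerel} to write $T_P \sigma$ for each monic $P$ with $\deg P \leq 1$ as an explicit sum of generators. Summing these expressions over $P \in A_{+\leq 1}$ and exploiting that we are working in characteristic $p$, the combinatorial structure of the coefficients produces numerous cancellations, leaving an explicit residual $\Fp$-combination of generators. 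The restriction to $\deg P \leq 1$ is what keeps this step tractable: the Merel-type formulas are much simpler in this degree range (compare Remark~\ref{rema-TPdeg2}).

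The main obstacle will be verifying that the surviving combination of Manin--Teitelbaum generators is non-zero in the modular symbol space modulo $p$, which is not a free module on the generators but only presented by them. This is precisely where the hypothesis $\deg \nn \geq 5$ enters: invoking the main theorem of \cite{ArmBaseSM}, which produces a large explicit family of Manin--Teitelbaum generators known to be $\Fp$-linearly independent in the quotient (a result with no known classical analog), I would choose $\sigma$ so that the non-cancelling terms after mod-$p$ reduction all lie within this explicit independent family. Reading off non-vanishing directly from their coefficients then yields $\somme \cdot \sigma \neq 0$, hence $\somme \neq 0$ in $\TT(\nn,\Fp)$, and combined with Corollary~\ref{coro-coeffDMF} the non-perfectness of $\pairing_{\DMF}$ follows.
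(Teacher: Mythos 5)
Your plan coincides with the paper's proof: it transfers the problem to modular symbols via Lemma~\ref{lemm-isomcarp}, evaluates $\sum_{\deg \pp \leq 1} T_\pp$ on a Manin--Teitelbaum generator using Proposition~\ref{prop-formuleMerel} in degree $\leq 1$, and detects non-vanishing through the linearly independent family of Theorem~\ref{theo-famindepSM}, which is exactly where $\deg \nn \geq 5$ is used. The only thing left implicit is the concrete choice of generator --- the paper takes $\xi(u:1)$ with $u$ monic of degree $1$ --- and the explicit cancellation computation of Proposition~\ref{prop-main}, which is the technical heart of the argument but follows the route you describe.
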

Numerical evidence to support this claim was given in \cite[6.4]{ArmCoeffDMF}. The assumption $(k,m)=(2,1)$ had to do with our way of computing numerical data, namely using the comparison isomorphisms recalled in Section~\ref{ssection-comparisonisom}. It would be interesting to gather numerical data on $\sum_{\deg \pp \leq 1 } T_\pp$ for spaces of Drinfeld modular forms of higher weight and type.

\subsection{Pairings for harmonic cochains}\label{ssec-pairingHC}

Following \cite[p.~44]{GekAnalytical} and \cite[Section~2.3]{PapWei-Doc2015}, we consider the $\Z$-bilinear map attached to the Fourier coefficient with respect to the trivial ideal:
$$\begin{array}{rccl}
	\pairing_{\HC,\Z}: & \TT(\nn,\Z) \times \Harm_{0}(\nn,\Z) & \longrightarrow & \Z \\
	& (\theta ,F) & \longmapsto & c_{1}(\theta F).
\end{array}$$
It is well-defined by Proposition~\ref{prop-gekDvtfour}/\ref{item-CF1}.

\begin{prop}[Gekeler, Papikian-Wei]\label{prop-connupairingharm}\ 
\begin{enumerate}
	\item\label{item-gek1} The pairing $\pairing_{\HC,\Z}$ has zero left and right-kernel (i.e. it is non-degenerate over $\Z$).
	\item\label{item-gek2} After tensoring with $\Z[p^{-1}]$, the pairing $\pairing_{\HC,\Z}$ becomes perfect.
	\item\label{item-pw}  The pairing $\pairing_{\HC,\Z}$ is perfect over $\Z$ when $\deg \nn = 3$.
\end{enumerate}	
\end{prop}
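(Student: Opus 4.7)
The three statements are of increasing depth, so I would address them separately.

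\emph{Part~\ref{item-gek1}.} The plan here is to combine two inputs: the formula $c_1(T_\mm F) = q^{\deg \mm} c_\mm(F)$ from Proposition~\ref{prop-C1TMF}, and the uniqueness of a harmonic cochain from its Fourier coefficients (Proposition~\ref{prop-gekDvtfour}/\ref{item-coch-Fexp}). If $F \in \Harm_0(\nn,\Z)$ lies in the right-kernel, testing the pairing at $\theta = T_\mm$ gives $q^{\deg\mm} c_\mm(F) = 0$ for every $\mm$; dividing by $q^{\deg\mm}$ (non-zero in $\Z$) forces all Fourier coefficients to vanish, hence $F = 0$. If $\theta \in \TT(\nn,\Z)$ lies in the left-kernel, then for every $F$ and every $\mm$ one has $c_1(\theta T_\mm F) = 0$; using commutativity of Hecke operators in characteristic zero and Gekeler's formula again, one finds $c_\mm(\theta F) = q^{-\deg\mm} c_1(T_\mm (\theta F)) = 0$ for every $\mm$, so $\theta F = 0$ for every $F$, i.e.\ $\theta = 0$ in $\End_\Z(\Harm_0(\nn,\Z))$.

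\emph{Part~\ref{item-gek2}.} Tensoring with $\Q$ is flat, so part~\ref{item-gek1} gives non-degeneracy over $\Q$; since $\TT(\nn,\Q)$ and $\Harm_0(\nn,\Q)$ are $\Q$-vector spaces of common dimension $g(\nn)$ (Lemma~\ref{lem-rankHeckeZ} and the discussion following Proposition~\ref{prop-gekDvtfour}), the pairing is perfect over $\Q$. The plan is then to show that the map
\[
\Harm_0(\nn,\Z[p^{-1}]) \longrightarrow \Hom_{\Z[p^{-1}]}\bigl(\TT(\nn,\Z[p^{-1}]), \Z[p^{-1}]\bigr), \qquad F \longmapsto \bigl(\theta \mapsto c_1(\theta F)\bigr),
\]
is surjective. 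Given a linear form $\phi$, extend to $\Q$ and use perfectness there to find $F \in \Harm_0(\nn,\Q)$ with $c_1(\theta F) = \phi(\theta)$ for every $\theta$. Specializing at $\theta = T_\mm$ gives $c_\mm(F) = q^{-\deg\mm} \phi(T_\mm) \in \Z[p^{-1}]$, since $q$ is a power of $p$. The explicit reconstruction formula in Proposition~\ref{prop-gekDvtfour}/\ref{item-coch-Fexp} writes each value $F(e)$ as a $\Z[p^{-1}]$-linear combination of the $c_\mm(F)$, so $F$ takes values in $\Z[p^{-1}]$, hence lies in $\Harm_0(\nn,\Z[p^{-1}])$. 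Surjectivity of the companion map $\TT(\nn,\Z[p^{-1}]) \to \Hom(\Harm_0(\nn,\Z[p^{-1}]), \Z[p^{-1}])$ follows by $\Z[p^{-1}]$-linear duality applied to free modules of equal finite rank.

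\emph{Part~\ref{item-pw}.} I would cite \cite{PapWei-JNT2016} directly: after parts~\ref{item-gek1}--\ref{item-gek2}, the claim is equivalent to showing that the power of $p$ measuring the index of $\Harm_0(\nn,\Z)$ in $\Hom_\Z(\TT(\nn,\Z),\Z)$ is trivial when $\deg\nn = 3$, and this is proved by Papikian and Wei through a direct analysis at small prime level.

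\emph{Main obstacle.} Parts~\ref{item-gek1}--\ref{item-gek2} are essentially formal consequences of Gekeler's multiplicativity formula: the factor $q^{\deg\mm}$ is exactly what obstructs integral perfectness, and inverting $p$ removes it. The genuine difficulty lies in~\ref{item-pw}, which is a special-level theorem whose proof I would not reproduce here.
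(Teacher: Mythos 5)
Your proposal is correct and follows essentially the same route as the paper, which simply notes that parts~\ref{item-gek1} and~\ref{item-gek2} follow from the formula $c_1(T_\pp F)=q^{\deg\pp}c_\pp(F)$ of Proposition~\ref{prop-C1TMF} (citing \cite[Thm.~3.17]{GekAnalytical}) and cites \cite[Thm.~4.1]{PapWei-JNT2016} for part~\ref{item-pw}. You have merely unpacked the delegated details --- testing against the operators $T_\mm$, invoking uniqueness of the Fourier expansion, and using the reconstruction formula to control denominators --- and these details are sound.
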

\begin{proof}
The first two statements follow from \eqref{eq-C1TPF}, see \cite[Thm.~3.17]{GekAnalytical}. For the third one, see \cite[Thm.~4.1]{PapWei-JNT2016}.
\end{proof}

\begin{rema}Let us mention other related results on this pairing.
\begin{enumerate}
\item The first two statements of Proposition~\ref{prop-connupairingharm} remain true for harmonic cochains with values in a coefficient ring (\cite[Thm~2.20]{PapWei-Doc2015}).
\item The perfection of $\pairing_{\HC,\Z}$ after scalar extension to $\C$ remains true for harmonic cochains in $\Harm(\nn,\C)$ and their corresponding Hecke algebra (\cite[Lem.~5.1]{ArmWei}).\end{enumerate}
\end{rema}

We obtain a pairing modulo the characteristic~$p$:
$$\begin{array}{rccl}
	\pairing_{\HC,\Fp}: &  \TT(\nn,\Fp) \times \Harm_{0,0}(\nn,\Fp) & \longrightarrow & \Fp \\
	& (\theta \bmod p,F \bmod p) & \longmapsto & c_{1}(\theta F) \bmod p.
\end{array}$$
Similarly to the pairing $\pairing_{\DMF}$, we have $\dim_{\Fp} \TT(\nn,\Fp) = g(\nn) = \dim_{\Fp} \Harm_{0,0}(\nn,\Fp)$ by Corollary~\ref{cor-isomalghecke}. 
Note that \eqref{eq-C1TPF} does not implies that $T_\pp$ is in the left-kernel of $\pairing_{\HC,\Fp}$ because $c_{\pp}(F) \in \Z[p^{-1}]$ by Proposition~\ref{prop-gekDvtfour}.

\smallskip
We now give elements in the left-kernel of $\pairing_{\HC,\Fp}$.
\begin{prop}\label{prop-eltnoyaucochaine}
The left-kernel of $\pairing_{\HC,\Fp}$ contains the following elements:
\begin{enumerate}
\item for any $d\geq 1$, $ \sum_{\deg \pp \leq d} T_\pp$
\item for any $d\geq 2$, $\sum_{\deg \pp=d} T_\pp$.
\end{enumerate}
\end{prop}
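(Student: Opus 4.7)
The plan is to reduce both assertions to a single identity obtained by evaluating the Fourier inversion formula (Proposition~\ref{prop-gekDvtfour}~(1)) at $u = 0$, and then to exploit the $\Z$-integrality of harmonic cochains together with the fact that $q$ is a power of $p$. Pick $F \in \Harm_{0,0}(\nn,\Fp)$ and lift it to $\tilde F \in \Harm_0(\nn,\Z)$, which is possible since $\Harm_{0,0}(\nn,\Fp) = \Harm_0(\nn,\Z) \otimes_{\Z} \Fp$. Using the formula $c_1(T_\pp \tilde F) = q^{\deg \pp} c_\pp(\tilde F)$ from Proposition~\ref{prop-C1TMF}, one has
\[ c_1\!\left(\sum_{\deg \pp \leq d} T_\pp \tilde F\right) = \sum_{\deg \mm \leq d} q^{\deg \mm} c_\mm(\tilde F). \]

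Next, I would apply Proposition~\ref{prop-gekDvtfour}~(1) at $k = d+2$ and $u = 0$. Since $\nu(0) = q - 1$, this gives
\[ \tilde F \smallpmatrix{\pi^{d+2}}{0}{0}{1} = (q-1) \sum_{j=0}^{d} q^{j-d} \sum_{\deg \mm = j} c_\mm(\tilde F), \]
and multiplying by $q^d$ yields the key identity
\[ q^d\, \tilde F\smallpmatrix{\pi^{d+2}}{0}{0}{1} = (q-1)\, c_1\!\left(\sum_{\deg \pp \leq d} T_\pp \tilde F\right). \]
For assertion~(1), the left-hand side is $q^d$ times an integer, hence divisible by $p$ as soon as $d \geq 1$, while $q - 1 \equiv -1 \pmod p$ is invertible modulo $p$. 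Consequently $c_1\!\left(\sum_{\deg \pp \leq d} T_\pp \tilde F\right) \equiv 0 \pmod p$, which proves~(1).

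Assertion~(2) then follows immediately from~(1) by linearity: for $d \geq 2$, writing $\sum_{\deg \pp = d} T_\pp = \sum_{\deg \pp \leq d} T_\pp - \sum_{\deg \pp \leq d-1} T_\pp$ expresses it as a difference of two elements already in the left-kernel of $\pairing_{\HC,\Fp}$ by~(1) (here $d \geq 2$ is needed so that the subtracted sum has $d - 1 \geq 1$). No substantial obstacle arises in this argument; the only step that requires a moment's thought is noticing that specializing the Fourier inversion formula at $u = 0$ reproduces exactly the partial sum $\sum_{\deg \mm \leq d} q^{\deg \mm} c_\mm(\tilde F)$ that governs the action of $\sum_{\deg \pp \leq d} T_\pp$ on the Fourier coefficient $c_1$, after which integrality of $\tilde F$ at the matrices $\smallpmatrix{\pi^{k}}{0}{0}{1}$ forces the required divisibility by $p$.
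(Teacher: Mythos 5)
Your proof is correct and follows essentially the same route as the paper: both combine the Fourier inversion formula of Proposition~\ref{prop-gekDvtfour} with the identity $c_1(T_\pp F)=q^{\deg\pp}c_\pp(F)$ and then use that $q^{d}\,\tilde F\smallpmatrix{\pi^{d+2}}{*}{0}{1}$ is an integer divisible by $p$ while $q-1$ is prime to $p$. The only (cosmetic) difference is that you specialize the inversion formula at $u=0$, where $\nu(0)=q-1$ exactly, whereas the paper keeps a general $u$ and uses the congruence $\nu(Mu)\equiv -1 \bmod p$; part (2) is obtained by the same differencing in both arguments.
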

\begin{proof}
\begin{enumerate}
	\item  Let $u\in \Kinf/\pi^{k} \Oinf$ and $F \in \Harm_{0}(\nn,\Z)$. 	
	Let $d\geq 1$ and put $k=d+2\geq 3$. By Proposition~\ref{prop-gekDvtfour}/\ref{item-coch-Fexp} and \eqref{eq-C1TPF}, we have
	\begin{align*}
	 q^{k-2} F \left( \begin{pmatrix}\pi^k & u\\0&1\end{pmatrix}\right)& = \sum_{0 \leq j \leq k-2} q^j\sum_{\deg \mm = j} c_{\mm}(F) \nu(Mu)\\
	 & = \sum_{0 \leq j \leq k-2} \sum_{\deg \mm = j} \nu(Mu)\, c_{1}(T_\mm F) \\
	 &	= c_{1}(\theta F)
	 \end{align*}
where $\theta = \sum_{\deg \mm \leq k-2} \nu(Mu) \, T_\mm$. Now observe that $\theta \equiv -\sum_{\deg \mm \leq k-2} T_\mm \bmod p$. Moreover  $q^{k-2} F \left( \smallpmatrix{\pi^k}{u}{0}{1} \right)\in q\Z$ since $k\geq 3$. We get $c_{1}(\theta F) \equiv 0  \bmod p$ and the conclusion follows.
\item It is a direct consequence of the first statement.
\end{enumerate} 
\end{proof}

\section{Modular symbols over $\Fq(T)$}\label{sec-MS} This theory of modular symbols over the function field $K$ has been introduced by Teitelbaum \cite{TeiMS}. We recall the main statements we will use, as well as additional results from \cite{ArmBaseSM}.

\subsection{Spaces of modular symbols}
Recall that $\GL_2(K)$ acts from the left on the projective line $\PP^{1}(K)$. Let $M=\Div^{0}(\PP^1(K))$, the $\Z$-module of degree-$0$ divisors supported on $\PP^{1}(K)$ with the corresponding left-action of $\GL_2(K)$. Let $R$ be a commutative ring with unity.

\begin{defi}
	Set $\SM(\nn,R) = H_{0}(\Gamma_0(\nn), M \otimes_{\Z} R)$, the group of coinvariants of $M\otimes_{\Z}R$ relative to the action of $\Gamma_0(\nn)$.
	It has a canonical $R$-module structure. Its elements are called the \emph{modular symbols for $\Gamma_0(\nn)$ with coefficients in $R$}.
\end{defi} 
For any $r$ and $s$ in $\PP^{1}(K)$, the divisor $(s)-(r) \in M$ defines a class $[r,s]$ in $\SM(\nn,R)$. All these classes form a generating set of $\SM(\nn,R)$ as an abelian group.

\begin{defi}
	Set $B(\nn) = \Div^{0}(\Gamma_0(\nn) \bs \PP^{1}(K))$, the $\Z$-module of degree-$0$ divisors supported on $\Gamma_0(\nn) \bs \PP^{1}(K)(=\cusps(\nn))$ and set $B(\nn, R) = B(\nn) \otimes_{\Z} R$. The boundary map 
	$$\begin{array}{rcl}
	\SM(\nn,R) & \longrightarrow & B(\nn,R) \\
	\lbrack r,s \rbrack &\longmapsto & (\Gamma_0(\nn)s) - (\Gamma_0(\nn)r)
	\end{array}$$ 
is surjective. Its kernel is denoted by $\SM_0(\nn,R)$ and its elements are called the \emph{cuspidal modular symbols for $\Gamma_0(\nn)$ with coefficients in $R$}.
\end{defi}
By construction, there are $R$-modules isomorphisms 
\begin{align}
	\SM(\nn,\Z) \otimes_{\Z} R \simeq \SM(\nn, R), & \\
	\SM_0(\nn,\Z)\otimes_{\Z} R \simeq \SM_{0}(\nn,R) & \text{ if $R/\Z$ is a flat ring extension.}
\end{align}
We also recall results on the torsion of modular symbols.
\begin{prop}[{\cite[p. 277]{TeiMS}}]\label{prop-smtorsion}
	The torsion of $\SM(\nn,\Z)$ is annihilated by $q^2-1$. When $\nn$ is prime of odd (resp. even) degree, the torsion of $\SM(\nn,\Z)$ is zero (resp. cyclic of order $q+1$).
\end{prop}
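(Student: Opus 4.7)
The approach rests on Teitelbaum's Manin-style finite presentation of $\SM(\nn,\Z)$, which describes it as a finitely generated abelian group with generators indexed by $\Gamma_0(\nn)$-cosets in $\GL_2(A)$ and relations coming from matrices of finite order fixing divisors on $\PP^{1}(K)$. As in Manin's classical computation, the torsion is controlled by the conjugacy classes of elliptic elements of $\Gamma_0(\nn)$.

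For the first assertion, that $(q^2-1)\cdot \SM(\nn,\Z)\tors = 0$, I would classify the finite-order elements of $\GL_2(A)$. Any such semisimple $\gamma$ has characteristic polynomial in $A[X]$ and roots of unity as eigenvalues, hence the eigenvalues lie in $\overline{\Fq}\et$. Since $K=\Fq(T)$ has constant field $\Fq$, those eigenvalues either both lie in $\Fq\et$ or form a conjugate pair in $\F_{q^2}\et$, so the order of $\gamma$ divides $q^2-1$. The remaining finite-order elements of $\GL_2(A)$ are scalars times unipotents, which are parabolic and thus contribute only to cusp stabilizers and to the torsion-free quotient $B(\nn)$. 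Consequently every torsion-inducing relation in Teitelbaum's presentation has order dividing $q^2-1$. Equivalently one can argue via the long exact sequence in group homology attached to $0\to M\to \Z[\PP^{1}(K)]\to \Z\to 0$ and a Bass--Serre analysis of $\Gamma_0(\nn)\bs\BT$: the group $\SM(\nn,\Z)\tors$ is a subquotient of $H_1(\Gamma_0(\nn),\Z)$, which is assembled from abelianizations of vertex and edge stabilizers whose torsion orders all divide $q^2-1$.

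For the refined statement when $\nn$ is prime, the analysis descends to $\PGL_2(A)$, whose non-central torsion elements have order dividing $q+1$ (their eigenvalues modulo $\Fq\et$ generate the cyclic group $\F_{q^2}\et/\Fq\et$ of order $q+1$). A non-central elliptic element of $\Gamma_0(\nn)$ generates an order of the constant-field extension $\F_{q^2}(T)/K$ embedded in $M_2(A)$, and it survives inside $\Gamma_0(\nn)$ precisely when the prime $\nn$ splits in $\F_{q^2}\otimes_{\Fq}A = \F_{q^2}[T]$. For $\nn$ prime of degree $d$, the residue field is $A/\nn \simeq \F_{q^d}$, which contains $\F_{q^2}$ if and only if $d$ is even. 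Thus in the odd-degree case no non-central elliptic orbit survives and $\SM(\nn,\Z)\tors = 0$; in the even-degree case exactly one $\Gamma_0(\nn)$-conjugacy class of elliptic elements survives, producing a single cyclic factor $\Z/(q+1)\Z$ in the torsion.

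The main obstacle is ruling out accidental cancellation, duplication or additional torsion contributions between the elliptic and cusp-stabilizer sources, and verifying that the surviving elliptic orbit produces a factor of the full order $q+1$ rather than a proper divisor. Teitelbaum bypasses these spectral-sequence subtleties by writing the Manin-style presentation of $\SM(\nn,\Z)$ explicitly and computing the abelianization of the resulting quotient directly, which makes the single cyclic factor of order $q+1$ (or its vanishing) immediately visible in the relations.
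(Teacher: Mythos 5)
The paper does not prove this proposition at all: it is imported verbatim from Teitelbaum \cite[p.~277]{TeiMS} and used as a black box (its only role here is to show that $\SM_0(\nn,\Z)\tors\otimes_\Z\Fp=0$ in Lemma~\ref{lemm-isomcarp}). So there is no in-paper argument to compare yours against; what can be assessed is whether your sketch would actually establish the statement, and in its present form it would not. You correctly identify the governing mechanism --- torsion in the Manin--Teitelbaum presentation comes from the finite-order relation matrices and their fixed classes in $\PP^1(A/\nn)$, and the survival of the order-$(q+1)$ contribution is controlled by whether $\F_{q^2}$ embeds in $A/\nn$, i.e.\ by the parity of $\deg\nn$ for prime $\nn$ --- but your final paragraph concedes that the decisive step (no cancellation, no extra contributions, the surviving class has full order $q+1$) is exactly what Teitelbaum's explicit computation supplies. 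A proof that defers its hardest step to the source being reconstructed is an outline, not a proof.

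Two concrete soft spots. First, the dismissal of $p$-torsion is not automatic. The three-term relation comes from the order-$3$ matrix $\smallpmatrix{0}{1}{-1}{-1}$, which is \emph{unipotent} when $p=3$; since the asserted annihilator $q^2-1$ is prime to $p$, one must actually verify that such elements contribute no $p$-torsion to $H_0(\Gamma_0(\nn),M)$, and likewise that the large $p$-groups appearing in vertex stabilizers along the cusps (upper-triangular groups with polynomial entries) are killed in the passage to $\SM(\nn,\Z)$. Your phrase ``contribute only to cusp stabilizers and to the torsion-free quotient'' asserts this rather than proves it. Second, in the even-degree case the prime $\nn$ \emph{splits} into two primes of $\F_{q^2}[T]$, so a count of optimal embeddings up to $\Gamma_0(\nn)$-conjugacy does not obviously yield ``exactly one'' surviving elliptic orbit; one must track how the several fixed classes of the two-term relation (twisted by the diagonal relations) combine in the abelianized presentation to give a single cyclic factor of order exactly $q+1$ rather than a product of smaller cyclic groups or a proper divisor. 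Both points are resolved by Teitelbaum's direct computation with the presentation of Theorem~\ref{theo-presentationSM}, which is the proof you would need to write out.
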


\subsection{Modular symbols as geodesics}
Let $H_{1}(\Gamma_0(\nn)\bs\BT,\cusps(\nn),\Z)$ be the first relative homology group of the graph $\Gamma_0(\nn)\bs\BT$ with respect to its cusps and $H_{1}(\Gamma_0(\nn)\bs\BT,\Z)$ its subgroup of cycles. For any $r$ and $s$ in $\Gamma_0(\nn)\bs\PP^{1}(K)$, there exists a unique geodesic $g_{r,s}$ in the Bruhat-Tits tree~$\BT$ which connects $r$ to $s$. 
By \cite[p.~277]{TeiMS}, by mapping the modular symbol $[r,s]$ to the image of $g_{r,s}$ in $\Gamma_0(\nn)\bs\BT$, we have an isomorphism of $\Z$-modules
\begin{equation}\label{eq-isomSMH1graphe} \SM_0(\nn,\Z) / \SM_0(\nn,\Z)\tors \simeq H_{1}(\Gamma_0(\nn)\bs\BT,\Z).\end{equation}
As a consequence, we have
$$\dim_\Q \SM(\nn,\Q) = g(\nn)+c(\nn)-1 \quad\text{and}\quad  \dim_\Q \SM_{0}(\nn,\Q) = g(\nn)$$
where $g(\nn)$ and $c(\nn)$ were introduced in Section~\ref{subsec-DMFexp}.

\subsection{Hecke operators on modular symbols}
They are defined using the left-action on $\PP^{1}(K)$ by the matrices in $S_\pp$.

\begin{defi}
	Let $\pp\lhd A$. For any $r$ and $s$ in $\PP^{1}(K)$, let 
	$$	T_\pp [r,s] = \sum_{g \in S_\pp}  \left[ gr,gs \right] = \sum_{\smallpmatrix{a}{b}{0}{d} \in S_\pp} \left[ \frac{ar+b}{d},\frac{as+d}{d} \right] \quad \in \SM(\nn,R).$$	
\end{defi}
It extends to a $R$-linear transformation $T_\pp$ of the space $\SM(\nn,R)$, called the \emph{$\pp$-th Hecke operator}. We also denote it by $T_P$ where $P$ is the monic generator of $\pp$. 

Their properties are similar to Hecke operators on harmonic cochains, namely they stabilize $\SM_{0}(\nn,R)$ and satisfy:
\begin{itemize}
	\item for any \emph{coprime} ideals $\pp,\pp'$ of $A$, $T_{\pp \pp'} = T_{\pp} T_{\pp'}$,
	\item for any prime ideal $\qq\nmid\nn$ and $i\geq 2$, $T_{\qq^{i}} = T_{\qq^{i-1}} T_\qq -q^{\deg \qq} T_{\qq^{i-2}}$, 
	\item  for any prime ideal $\qq\mid\nn$ and $i\geq0$, $T_{\qq^i} = T_\qq^{i}$.
\end{itemize}

\subsection{Canonical isomorphism with harmonic cochains}
We recall a canonical isomorphism between cuspidal modular symbols and harmonic cochains, by combining statements from \cite{TeiMS}, \cite{GR} and \cite{GN}.

\begin{lemm}[{\cite[Lem.~4.4]{ArmBaseSM}}]\label{lemm-isomSMcochsurZ}
	There is a canonical isomorphism of $\Z$-modules
	$$ \SM_0(\nn,\Z) / \SM_0(\nn,\Z)\tors \xrightarrow{\sim} \Harm_0(\nn,\Z)$$
	and it is compatible with Hecke operators.
\end{lemm}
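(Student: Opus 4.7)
The plan is to combine two identifications, the first already recorded in the excerpt and the second essentially implicit in the Gekeler--Reversat and Teitelbaum formalism. First, by \eqref{eq-isomSMH1graphe} we already have
$$ \SM_0(\nn,\Z)/\SM_0(\nn,\Z)\tors \;\xrightarrow{\sim}\; H_1(\Gamma_0(\nn)\bs\BT,\Z), $$
sending $[r,s]$ to the reduction modulo $\Gamma_0(\nn)$ of the unique geodesic in $\BT$ from $r$ to $s$. It therefore suffices to exhibit a canonical, Hecke-equivariant $\Z$-isomorphism between $H_1(\Gamma_0(\nn)\bs\BT,\Z)$ and $\Harm_0(\nn,\Z)$.

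Second, I would build this graph-theoretic identification directly. A $1$-cycle on $\Gamma_0(\nn)\bs\BT$ is by definition an assignment $\sigma$ of an integer to each oriented edge satisfying $\sigma(\overline e)=-\sigma(e)$ and $\sum_{o(e)=v}\sigma(e)=0$ at every vertex $v$. Pulling back along the quotient map $\BT\to\Gamma_0(\nn)\bs\BT$ yields a $\Gamma_0(\nn)$-invariant $\Z$-valued function on $E(\BT)$ fulfilling exactly the two conditions that define $\Harm(\nn,\Z)$, and whose support modulo $\Gamma_0(\nn)$ is finite, hence an element of $\Harm_0(\nn,\Z)$. Conversely, given $F\in\Harm_0(\nn,\Z)$, its descent to $\Gamma_0(\nn)\bs\BT$ is an alternating, vertex-harmonic $1$-chain; the structure of $\Gamma_0(\nn)\bs\BT$ recalled in Section~\ref{subs-BTtree} as a finite core with finitely many half-lines attached forces any harmonic finitely-supported function to vanish identically along each half-line (the harmonicity condition propagates any value along an infinite line, which contradicts finite support unless the value is zero), so $F$ is automatically a cycle. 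Thus both constructions are mutually inverse, producing the desired $\Z$-isomorphism $H_1(\Gamma_0(\nn)\bs\BT,\Z)\simeq \Harm_0(\nn,\Z)$.

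Finally, I would verify Hecke equivariance of the composite. For each $g\in S_\pp$, the geodesic joining $gr$ to $gs$ in $\BT$ is the $g$-translate of the geodesic joining $r$ to $s$, so the action of $T_\pp=\sum_{g\in S_\pp}g$ on $\SM_0(\nn,\Z)$ corresponds under \eqref{eq-isomSMH1graphe} to edge-wise summation over $g$-translates of cycles in $\Gamma_0(\nn)\bs\BT$; this matches exactly the definition of $T_\pp$ on $\Harm_0(\nn,\Z)$ recalled in Section~\ref{sssection-heckeopharm}. The main obstacle is not a conceptual one but the careful bookkeeping of orientation and sign conventions so that the Teitelbaum geodesic picture, the $H_1$-cycle picture, and the Gekeler--Reversat harmonicity picture are aligned coherently; once the conventions are fixed as in \cite{GR}, the lemma follows.
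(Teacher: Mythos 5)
Your first step (composing with \eqref{eq-isomSMH1graphe}) matches the paper's route, but your second step contains a genuine gap: you treat the projection $\BT\to\Gamma_0(\nn)\bs\BT$ as if it were a plain graph quotient with trivial stabilizers. It is not. A vertex $v$ of $\BT$ may have a nontrivial stabilizer $\Gamma_0(\nn)_v$, which permutes the $q+1$ edges emanating from $v$; an orbit of size $n(v)/n(e)$ collapses to a single edge of the quotient graph. Consequently the naive pullback $F(e)=\varphi(\tilde e)$ of a cycle $\varphi$ does \emph{not} satisfy $\sum_{o(e)=v}F(e)=0$ in general, and your two ``mutually inverse'' constructions are not actually inverse to each other. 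The correct map, which the paper takes from \cite[3.2.5]{GR}, is $j(\varphi)(e)=n(e)\varphi(\tilde e)$ with $n(e)=\#(\Gamma_0(\nn)_e/\Fq\et)$ as in Section~\ref{sssection-heckeopharm}; with this weight one has $\sum_{o(e)=v}n(e)\varphi(\tilde e)=n(v)\sum_{\tilde e,\,o(\tilde e)=\tilde v}\varphi(\tilde e)=0$, which is what makes the image harmonic.

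Even after inserting the weights, the argument is not finished: the map $j$ is a priori only injective with finite cokernel (this is all that holds for a general arithmetic subgroup, see \cite[3.3, 3.4]{GR}). Its surjectivity for $\Gamma_0(\nn)$ --- equivalently, that every cuspidal $\Gamma_0(\nn)$-invariant harmonic cochain has values divisible by the local weights $n(e)$ --- is precisely the deep theorem of Gekeler--Nonnengardt invoked in the paper (\cite[Thm.~3.3]{GN}), and it is the essential input that your claim of mutually inverse bijections silently assumes. Your half-line/cuspidality observation and the Hecke-equivariance sketch are fine in spirit, but without the weights $n(e)$ and without citing (or reproving) Gekeler--Nonnengardt, the proof does not go through.
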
	
This map is obtained by composing the isomorphism \eqref{eq-isomSMH1graphe} with the canonical isomorphism $$j : H_{1}(\Gamma_0(\nn)\bs\BT,\Z) \xrightarrow{\sim} \Harm_0(\nn,\Z).$$ Indeed by \cite[3.2.5]{GR}, any cycle $\varphi\in H_{1}(\Gamma_0(\nn)\bs\BT,\Z)$ defines a harmonic cochain $j(\varphi)$ by
$$ \forall e\in Y(\BT), \quad j(\varphi)(e) = n(e) \varphi(\tilde{e})$$
where $\tilde{e}$ is the image of $e$ in $E(\Gamma_0(\nn)\bs\BT)$ and $n(e)$ was defined in
Section~\ref{sssection-heckeopharm}. This provides an injective $\Z$-linear map $j : H_{1}(\Gamma_0(\nn)\bs\BT,\Z) \to \Harm_0(\nn,\Z)$ with finite cokernel. Such a construction is still valid when $\Gamma_0(\nn)$ is replaced with an arbitrary arithmetic group $\Gamma \subset \GL_2(K)$ (see \cite[3.3, 3.4]{GR}). For $\Gamma_0(\nn)$, it is a deep result of Gekeler--Nonnengardt that the map $j$ is bijective: see \cite[Thm. 3.3]{GN}.

\medskip
We extend the isomorphism of Lemma~\ref{lemm-isomSMcochsurZ} to characteristic-$p$ coefficients.
\begin{lemm}\label{lemm-isomcarp} Let $\nn\lhd A$.
	\begin{enumerate}
		\item There are canonical isomorphisms of $\Fp$-vector spaces
		$$\SM_0(\nn,\Z)\otimes_{\Z} \Fp \xrightarrow{\sim} \Harm_{0,0}(\nn,\Fp) \xrightarrow{\sim}  M_{2,1}^{0,0}(\nn,\Fp)$$
		which are compatible with Hecke operators.
		\item There are canonical isomorphisms of $\Cinf$-vector spaces
		$$\SM_0(\nn,\Z) \otimes_{\Z}\Cinf  \xrightarrow{\sim}  \Harm_{0,0}(\nn,\Cinf) \xrightarrow{\sim}  M_{2,1}^{0,0}(\nn)$$
		which are compatible with Hecke operators.
	\end{enumerate}
\end{lemm}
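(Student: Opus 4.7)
The plan is to deduce both statements from Lemma \ref{lemm-isomSMcochsurZ} and Theorem \ref{theo-GRisom} by a simple base-change argument, the only subtle point being the vanishing of torsion.

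First I would observe the following key arithmetic fact: since $q$ is a power of $p$, we have $q\equiv 0\pmod p$, so $q^2-1\equiv -1\pmod p$ and hence $q^2-1$ is a unit in both $\Fp$ and $\Cinf$. By Proposition~\ref{prop-smtorsion}, the group $\SM_0(\nn,\Z)\tors$ is annihilated by $q^2-1$, hence $\SM_0(\nn,\Z)\tors\otimes_{\Z} R=0$ for $R\in\{\Fp,\Cinf\}$. Therefore, tensoring the short exact sequence
\[
0 \to \SM_0(\nn,\Z)\tors \to \SM_0(\nn,\Z) \to \SM_0(\nn,\Z)/\SM_0(\nn,\Z)\tors \to 0
\]
with $R$ over $\Z$ yields an isomorphism $\SM_0(\nn,\Z)\otimes_{\Z}R \xrightarrow{\sim} (\SM_0(\nn,\Z)/\SM_0(\nn,\Z)\tors)\otimes_{\Z}R$.

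Next I would apply $-\otimes_{\Z}R$ to the Hecke-equivariant isomorphism of Lemma~\ref{lemm-isomSMcochsurZ}. Combined with the previous step, this gives a canonical, Hecke-compatible isomorphism
\[
\SM_0(\nn,\Z)\otimes_{\Z} R \xrightarrow{\sim} \Harm_0(\nn,\Z)\otimes_{\Z} R.
\]
By the very definition of $\Harm_{0,0}(\nn,R)$ recalled in Section~\ref{sssection-hcoch}, the right-hand side is canonically $\Harm_{0,0}(\nn,R)$. This gives the first arrow in both parts of the lemma.

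For the second arrow, I would simply invert the residue isomorphism from Theorem~\ref{theo-GRisom}: in the characteristic-$p$ case this is $\res_{\Fp}^{-1}:\Harm_{0,0}(\nn,\Fp)\xrightarrow{\sim}M_{2,1}^{0,0}(\nn,\Fp)$, and in the $\Cinf$ case this is $\res_{\Cinf}^{-1}:\Harm_{0,0}(\nn,\Cinf)\xrightarrow{\sim}M_{2,1}^{0,0}(\nn)$ (using the equality $\Harm_{0,0}(\nn,\Cinf)=\Harm_0(\nn,\Cinf)$ valid because $\Cinf$ is flat over $\Z$, implicit in the statement of Theorem~\ref{theo-GRisom}). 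Hecke compatibility of the composite follows from the Hecke compatibility of each factor. There is no real obstacle in this proof; the only thing that requires any attention is the coprimality of $p$ and $q^2-1$ that ensures the torsion vanishes upon base change, which is what allows the statement to hold for $\SM_0(\nn,\Z)\otimes_{\Z}\Fp$ rather than only for a torsion-free quotient.
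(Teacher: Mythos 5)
Your proof is correct and follows essentially the same route as the paper: kill the torsion of $\SM_0(\nn,\Z)$ upon base change using Proposition~\ref{prop-smtorsion} (since $q^2-1$ is prime to $p$), identify the result with $\Harm_{0,0}(\nn,R)$ via Lemma~\ref{lemm-isomSMcochsurZ} and the definition of $\Harm_{0,0}$, then compose with the inverse of the residue isomorphism of Theorem~\ref{theo-GRisom}. The only cosmetic difference is that the paper obtains the $\Cinf$ case by extending scalars from the $\Fp$-structure rather than rerunning the argument over $\Cinf$ directly; both are fine.
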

\begin{proof}
	\begin{enumerate}
		\item By Lemma~\ref{lemm-isomSMcochsurZ}, we have an exact sequence of $\Z$-modules
		$$ 0 \to \SM_0(\nn,\Z)\tors \to \SM_0(\nn,\Z) \to \Harm_{0}(\nn,\Z) \to 0.$$
		Tensoring with $\Fp$, we get the exact sequence
		$$ \SM_0(\nn,\Z)\tors \otimes_{\Z}\Fp \to \SM_0(\nn,\Z)\otimes_{\Z}\Fp \to \Harm_{0}(\nn,\Z)\otimes_{\Z}\Fp \to 0.$$
		However the torsion of $\SM_0(\nn,\Z)$ is annihilated by $q^2-1$ (Proposition~\ref{prop-smtorsion}) thus $\SM_0(\nn,\Z)\tors \otimes_{\Z}\Fp = \{ 0 \}$. Also recall the canonical isomorphism $\Harm_{0}(\nn,\Z)\otimes_{\Z}\Fp \simeq \Harm_{0,0}(\nn,\Fp)$ from the definition of $\Harm_{0,0}$. The conclusion then follows from Theorem~\ref{theo-GRisom} and the compatibility of all isomorphisms with Hecke operators.
		\item We extend the scalars from $\Fp$ to $\Cinf$ starting from the previous isomorphisms and use the fact that $M_{2,1}^{0,0}(\nn,\Fp)$ is an $\Fp$-structure on $M_{2,1}^{0,0}(\nn)$.
		\end{enumerate}
\end{proof}

\begin{rema}
	In view of classical modular symbols, it is more natural to think of modular symbols as the \emph{dual space} of modular forms with respect to the integration pairing, which is Hecke-compatible. For modular symbols over $K$, there is a similar integration pairing on the Bruhat-Tits tree $\BT$. It provides an exact sequence
	$$ \SM_0(\nn,\Z)/\SM_0(\nn,\Z)\tors \to  \Hom(\Harm_{0}(\nn,\Z),\Z) \to \Phi_\infty \to 0$$
	where $\Phi_\infty$ is the finite group of connected components of the Néron model of the Jacobian variety $J(X_0(\nn))$ at the place $\infty$ (\cite[Thm.14]{TeiMS}). However it is not clear whether Lemma~~\ref{lemm-isomcarp} can be derived from this exact sequence. There are examples where the group $\Phi_\infty \otimes_{\Z}\Fp$ is non-zero (e.g. when $q=2$ and $\nn$ is the prime ideal generated by $T^4+T^3+1$, the group $\Phi_\infty$ has order $160$, see \cite[5.3.3]{Gekcuspidaldivisorclass1997}).
\end{rema}

\subsection{Manin-Teitelbaum presentation}
We recall Teitelbaum's finite presentation for the space of modular symbols \cite{TeiMS} which is the counterpart of Manin's presentation for classical modular symbols. The projective line $\PP^{1}(A/\nn)$ over the finite ring $A/\nn$ consists of equivalence classes $(u:v)$ of pairs $(u,v) \in A \times A$ with $(u)+(v)+\nn= A$. Recall that two pairs $(u_1,v_1)$ and $(u_2,v_2)$ are equivalent if there exists $w\in A$ such that $(w)+\nn =A$, $u_2 \equiv w u_1 \bmod \nn$ and $v_2 \equiv w v_1 \bmod \nn$. 

Note that any class in $\PP^{1}(A/\nn)$ may be written $(u:v)$ with $\gcd(u,v)=1$. The left-action of $\Gamma_0(\nn)$ on $\GL_2(A)$ provides the bijection
$$ \begin{array}{rcl}
	\Gamma_0(\nn) \bs \GL_2(A) & \longrightarrow & \PP^{1}(A/\nn) \\
	\Gamma_0(\nn) \begin{pmatrix} a &b\\u&v\end{pmatrix} &\longmapsto &(u:v).\end{array}$$	
Let $g = \smallpmatrix{a}{b}{u}{v} \in \GL_2(A)$ be a matrix corresponding to $(u:v)$. The map
$$ \begin{array}{rcl}
	\PP^{1}(A/\nn) & \longrightarrow & \SM(\nn,\Z) \\
	(u:v) & \longmapsto & [g(0),g(\infty)] = \left[ \frac{b}{v},\frac{a}{u} \right]
\end{array}$$
extends to a $\Z$-linear map
$$\xi : \Z[\PP^{1}(A/\nn)] \longrightarrow \SM(\nn,\Z).$$
Continued fraction expansions show that $\xi$ is surjective thus the family of modular symbols $(\xi(u:v))_{(u:v) \in \PP^{1}(A/\nn)}$ generates the space $\SM(\nn,\Z)$ over $\Z$. Relations among this generators are described by the next statement.

\begin{theo}[{\cite[Thm. 21]{TeiMS} and \cite[Thm.~4.6]{ArmBaseSM}}]\label{theo-presentationSM} The $R$-module $\SM(\nn,R)$ is isomorphic via $\xi$ to the quotient of the free $R$-module $R[\PP^{1}(A/\nn)]$  by the submodule generated, for all $(u:v)\in\PP^{1}(A/\nn)$, by the elements
	\begin{align*}
		&(u:v) + (-v:u) & \text{($2$-term relations)}, \\	
		&(u:v) + (v:-u-v) + (-u-v:u) & \text{($3$-term relations)} ,\\
		\forall (\delta_1,\delta_2) \in \Fq\et\times\Fq\et,\quad  &(u:v) - (\delta_1 u: \delta_2 v)&\text{(diagonal relations).} 
	\end{align*}	
\end{theo}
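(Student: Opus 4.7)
The plan is to adapt Manin's classical presentation of modular symbols to the function field setting, which is how Teitelbaum proceeds. The argument splits into four steps: (a) $\xi$ is well-defined, (b) each of the three listed families of relations vanishes under $\xi$, (c) $\xi$ is surjective, and (d) these families generate all relations. Well-definedness follows from the bijection $\Gamma_0(\nn)\bs\GL_2(A)\xrightarrow{\sim}\PP^{1}(A/\nn)$ recalled just before the theorem: two lifts of a class $(u:v)$ to matrices in $\GL_2(A)$ differ by left multiplication by an element of $\Gamma_0(\nn)$, which leaves the class $[g(0),g(\infty)]$ unchanged in $\SM(\nn,R)$. Each family of relations corresponds to right-multiplying $g$ by a specific element of $\GL_2(A)$: the diagonal matrices $\smallpmatrix{\delta_1}{0}{0}{\delta_2}$ with $(\delta_1,\delta_2)\in\Fq\et\times\Fq\et$ fix both $0$ and $\infty$, producing the diagonal relations; the involution $S=\smallpmatrix{0}{-1}{1}{0}$ swaps $0$ and $\infty$, yielding the $2$-term relations via $[g(0),g(\infty)]+[g(\infty),g(0)]=0$; and the order-three element $U=\smallpmatrix{1}{-1}{1}{0}$ (which satisfies $U^{3}=-I$) cyclically permutes $\{0,1,\infty\}$, so that the telescoping identity $[g\cdot 0,g\cdot\infty]+[g\cdot\infty,g\cdot 1]+[g\cdot 1,g\cdot 0]=0$ translates, after the projective equivalence $(u+v:-u)=(-u-v:u)$ in $\PP^{1}(A/\nn)$, to the stated $3$-term relation.

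Surjectivity of $\xi$ is standard: by the Euclidean algorithm in $A=\Fq[T]$, any element $r=p/q\in K$ with $\gcd(p,q)=1$ admits a finite continued-fraction expansion whose convergents assemble into a sequence of matrices $g_0,g_1,\ldots,g_N\in\GL_2(A)$ satisfying $[0,r]=\sum_{i}[g_i(0),g_i(\infty)]$ in $\SM(\nn,R)$. This is a verbatim transposition of Manin's trick with $\Fq[T]$ replacing $\Z$, and reduces surjectivity to the observation that $[0,\infty]$ generates the image of $\xi$ up to $\GL_2(A)$-action.

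The main obstacle is step (d), the completeness of the listed families. One structural route uses Nagao's theorem $\GL_2(A)\simeq \GL_2(\Fq)\ast_{B(\Fq)} B(A)$, presenting $\GL_2(A)$ as an amalgamated free product over the Borel subgroup $B$ of upper-triangular matrices. Bass--Serre theory then converts this amalgamation into a presentation of the $\GL_2(A)$-module $\Div^{0}(\PP^{1}(K))\otimes_{\Z} R$, whose kernel as a quotient of $R[\GL_2(A)]$ is generated on the right by the stabilizer of $(\infty)-(0)$ (which is the diagonal subgroup $\Fq\et\times\Fq\et$) and by the torsion elements $S$ and $U$ of $\GL_2(\Fq)$; no further relation is contributed by the infinite parabolic factor $B(A)$ because its action on $\{0,\infty\}$ is already controlled by the diagonal subgroup. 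Passing to $\Gamma_0(\nn)$-coinvariants and identifying $\Gamma_0(\nn)\bs\GL_2(A)$ with $\PP^{1}(A/\nn)$ then yields the statement. A more hands-on route, which is the one in the original argument of Teitelbaum and the revision of \cite{ArmBaseSM}, is to build an explicit inverse $\xi^{-1}$ on $\SM(\nn,R)$ from the continued-fraction expansion of step (c) and verify that its output is independent of all choices modulo the three listed families of relations. In either approach, the essential new feature compared with Manin's $\SL_2(\Z)$ presentation is the non-trivial scalar group $\Fq\et\times\Fq\et$ of diagonal matrices in $\GL_2$, which is precisely the reason for the diagonal family appearing in the statement.
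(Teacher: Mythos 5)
This theorem is not proved in the paper: it is recalled from \cite{TeiMS} (Thm.~21) and \cite{ArmBaseSM} (Thm.~4.6), so there is no in-text argument to compare yours against. Measured against what those sources do, your steps (a)--(c) are correct: well-definedness via the coset bijection $\Gamma_0(\nn)\bs\GL_2(A)\simeq\PP^1(A/\nn)$; the verification that $S=\smallpmatrix{0}{-1}{1}{0}$, $U=\smallpmatrix{1}{-1}{1}{0}$ and the diagonal torus produce exactly the three listed families (your bottom-row computations $(v,-u)$, $(u+v,-u)$, $(v,-u-v)$, matched to the stated form via scaling by $-1$, all check out); and the continued-fraction argument for surjectivity. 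One small omission: the statement is for an arbitrary commutative ring $R$, and you work implicitly over $\Z$; this is harmless because $H_0(\Gamma_0(\nn),-)$ and $-\otimes_{\Z}R$ are right exact, so a presentation over $\Z$ descends, but it should be said.

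The genuine gap is step (d), which is the entire content of the theorem. Your first route is the correct one in spirit --- Nagao's amalgam $\GL_2(A)\simeq\GL_2(\Fq)\ast_{B(\Fq)}B(A)$ is precisely the Bass--Serre reading of the half-line quotient $\GL_2(A)\bs\BT$, and this is the mechanism behind Teitelbaum's proof --- but the decisive sentence, that the kernel of $R[\GL_2(A)]\to\Div^0(\PP^1(K))\otimes_{\Z}R$ is generated by the stabilizer of $(\infty)-(0)$ together with the norm elements $1+S$ and $1+U+U^2$, and that ``no further relation is contributed by $B(A)$,'' is exactly the statement to be proved, and you only assert it. Closing it requires actually running the Mayer--Vietoris sequence in group homology for the amalgam (after Shapiro's lemma converts $\Gamma_0(\nn)$-coinvariants into $\GL_2(A)$-coinvariants of $M\otimes_{\Z}R[\PP^1(A/\nn)]$), computing $H_0$ of the factors $\GL_2(\Fq)$, $B(A)$ and $B(\Fq)$ on that module, and checking that the connecting map and the unipotent radical of $B(A)$ contribute nothing new --- this last point is where the degree-$0$ condition on divisors is used and is not automatic. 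Your alternative second route (an explicit inverse of $\xi$ from continued fractions, checked independent of choices) is not described at a level that could be verified, and I would not attribute it to Teitelbaum: his argument, like the one in \cite{ArmBaseSM}, goes through the tree. As written, your proposal shows the listed relations hold and that $\xi$ is surjective, but does not show the relations generate the whole kernel.
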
	

\smallskip
When $\nn$ is prime, there exists a large explicit family of linearly independent generators which we recall now. It has no known counterpart for classical modular symbols and will be our essential tool to prove the non-perfectness of pairings in Section~\ref{section-nonzeroelts}.

\begin{theo}[{\cite[Thm.~5.16 (ii)]{ArmBaseSM}}]\label{theo-famindepSM}
	Let $\nn$ be a prime ideal of $A$. The family
	$$ \{ \xi(1:0) \} \cup \{ \xi(u:v) \mid (u,v) \in A_+ \times A_+, \, \deg v < \deg u < \deg(\nn)/2,\, \gcd(u,v)=1 \}$$	
	is linearly independent in $\SM(\nn,R)$. Moreover in this family, the modular symbol $\xi(1:0)$ is not cuspidal whereas all the others are cuspidal.	
\end{theo}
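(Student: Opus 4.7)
I would prove the two claims of the theorem separately: the cuspidality dichotomy by a direct computation with the $\Gamma_0(\nn)$-orbit structure on $\PP^1(K)$, and the linear independence by invoking Teitelbaum's presentation (Theorem~\ref{theo-presentationSM}) together with a Euclidean-style reduction algorithm. Write $\mathcal{F}$ for the indexing set $\{(u,v)\in A_+\times A_+ \mid \deg v<\deg u<\deg\nn/2,\ \gcd(u,v)=1\}$.

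For cuspidality, recall that for prime $\nn$ we have $c(\nn)=2$, with the two $\Gamma_0(\nn)$-orbits on $\PP^1(K)$ consisting of the $r=c/d$ (in lowest terms) with $\nn\mid d$ respectively with $\gcd(d,\nn)=1$. For any $(u,v)\in\mathcal{F}$, the bounds $\deg u,\deg v<\deg\nn$ force $u$ and $v$ to be coprime to $\nn$, so writing $\xi(u:v)=[b/v,a/u]$ with $g=\smallpmatrix{a}{b}{u}{v}\in\GL_2(A)$, both endpoints lie in the $\Gamma_0(\nn)\cdot 0$-orbit and the boundary vanishes. In contrast, $g=\smallpmatrix{0}{-1}{1}{0}$ realizes $\xi(1:0)=[\infty,0]$, whose boundary $(\Gamma_0(\nn)\cdot 0)-(\Gamma_0(\nn)\cdot\infty)$ generates $B(\nn,R)\simeq R$ and is non-zero.

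For linear independence, consider a hypothetical relation $c_0\xi(1:0)+\sum_{(u,v)\in\mathcal{F}}c_{(u,v)}\xi(u:v)=0$ in $\SM(\nn,R)$. Applying the boundary map immediately forces $c_0=0$. Next I would check that the classes $(u:v)\in\PP^1(A/\nn)$ for $(u,v)\in\mathcal{F}$ are pairwise distinct and distinct from $(1:0)$: if $(u_1:v_1)=(u_2:v_2)$ in $\PP^1(A/\nn)$ then $\nn$ divides $u_1v_2-u_2v_1$, but the latter has degree strictly less than $2\cdot(\deg\nn/2)=\deg\nn$, so it vanishes in $A$, and monicity together with $\gcd(u_i,v_i)=1$ forces $(u_1,v_1)=(u_2,v_2)$. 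By Theorem~\ref{theo-presentationSM} any relation in $\SM(\nn,R)$ lifts to an $R$-combination of 2-term, 3-term and diagonal relations in $R[\PP^1(A/\nn)]$; after normalizing via the diagonal relations to work with monic representatives, the 2-term relation pairs $(u:v)$ with $(v:u)$, and the 3-term relation pairs it with $(v:u+v)$ and $(u+v:u)$. For $(u,v)\in\mathcal{F}$ none of these partners lies in $\mathcal{F}$: the first two have first coordinate of strictly smaller degree than the second, and the third has equal degrees, so relations involving elements of $\mathcal{F}$ always transport mass \emph{out} of $\mathcal{F}$.

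The remaining task is to build a reduction algorithm---the function-field counterpart of Manin's algorithm---using the presentation's relations together with Euclidean division in $A$: given a general class $(x:y)$ with $x,y\in A_+$ and $\gcd(x,y)=1$, rewrite it stepwise via 2- and 3-term relations as an $R$-combination of classes indexed by pairs of progressively smaller degree, terminating inside $\mathcal{F}\cup\{(1:0)\}$. The crucial role of the bound $\deg u<\deg\nn/2$ is to keep all intermediate polynomials of degree $<\deg\nn$ throughout the algorithm, so that the reduction is carried out entirely in $A$ without ever wrapping modulo $\nn$ and the canonical form is unambiguously determined. Feeding the assumed relation into this algorithm then forces $c_{(u,v)}=0$ for every $(u,v)\in\mathcal{F}$. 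The hard part---and the real substance of the proof---is the termination and well-definedness of the reduction: one must exhibit a complexity measure on pairs $(x,y)$ that strictly decreases after finitely many applications of the relations, and verify that different orders of reduction yield the same canonical form so that the procedure descends to a well-defined $R$-linear section on the quotient. This is the function-field analogue of the Manin--Merel framework for classical Manin symbols, and both the primality of $\nn$ (to keep $\PP^1(A/\nn)$ cleanly governed by $\GL_2$) and the square-root-of-$\nn$ bound on $\deg u$ are genuinely needed for the non-wrapping step to close.
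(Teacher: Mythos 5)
This theorem is not proved in the paper: it is imported verbatim from \cite[Thm.~5.16~(ii)]{ArmBaseSM}, so there is no in-text argument to compare yours against. Judged on its own terms, your proposal is sound and complete for the cuspidality dichotomy (for prime $\nn$ there are indeed two cusps, distinguished by whether the denominator in lowest terms is divisible by $\nn$; the degree bounds force $u$ and $v$ to be coprime to $\nn$, so both endpoints of $\xi(u:v)$ lie in the orbit of $0$, while $\xi(1:0)=[\infty,0]$ straddles the two orbits). Your observation that the classes $(u:v)$, $(u,v)\in\mathcal{F}$, are pairwise distinct in $\PP^{1}(A/\nn)$ because $u_1v_2-u_2v_1$ has degree $<\deg\nn$ is also correct and is exactly where the bound $\deg u<\deg(\nn)/2$ enters.

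The linear independence part, however, has a genuine gap. First, the observation that the $2$-term and $3$-term partners of any $(u:v)\in\mathcal{F}$ lie outside $\mathcal{F}$ is not sufficient: a relation witnessing $\sum c_{(u,v)}\xi(u:v)=0$ is an arbitrary $R$-combination of relation elements centered at arbitrary points of $\PP^{1}(A/\nn)$, and the ``mass outside $\mathcal{F}$'' contributed by different relation elements can cancel among themselves. To rule this out one needs either an explicit $R$-linear functional on $R[\PP^{1}(A/\nn)]$ that kills all three types of relations and separates the members of $\mathcal{F}$, or a well-defined normal form (retraction onto the span of $\mathcal{F}\cup\{(1:0)\}$). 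You correctly identify this as ``the hard part --- and the real substance of the proof,'' but you then leave it entirely unexecuted: no complexity measure is exhibited, termination is not proved, and confluence (independence of the reduction order) is not verified. Since that step is the whole content of the theorem, what you have is a plausible strategy rather than a proof. Note also that the family is in general only linearly independent, not a basis (Proposition~\ref{prop-smtorsion} shows $\SM(\nn,\Z)$ has $(q+1)$-torsion when $\deg\nn$ is even), so a naive ``reduce everything to canonical form'' algorithm cannot terminate inside $\mathcal{F}\cup\{(1:0)\}$ alone in the even-degree case; your scheme would need to be adjusted to account for this.
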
	
Although we will not need it, let us mention that when $\deg \nn$ is odd this family turns out to be an $R$-basis of $\SM(\nn,R)$ (see \cite[Thm.~5.16 (ii)]{ArmBaseSM}).

\smallskip
Finally we recall a formula for the action of Hecke operators in terms of generators of the finite presentation. This is a function field analog of a result of Merel \cite{MerelUniversalFourier}.

\begin{prop}[{\cite[Thm~6.1]{ArmBaseSM}}]\label{prop-formuleMerel}
	Let $\pp$ be an ideal of $A$. Let 
	$$ \Sigma_\pp = \left\{ \begin{pmatrix}a&b\\c&d\end{pmatrix} \in M_2(A) \grandmid (a,d) \in A_+\times A_+,\, \deg a > \deg b,\, \deg d > \deg c,\, (ad-bc)=\pp \right\}.$$
	For any $(u:v)\in\PP^{1}(A/\nn)$, we have the equality in $\SM(\nn,R)$
	$$ T_\pp \, \xi(u:v) = \sum_{\smallpmatrix{a}{b}{c}{d} \in \Sigma_\pp} \xi (au+cv:bu+dv)$$	
	where the sum is restricted to the terms which are well-defined in $\PP^{1}(A/\nn)$, i.e. $(au+cv)+ (bu+dv)+\nn = A$.
\end{prop}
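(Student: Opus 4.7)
The plan is to adapt Merel's strategy for expressing Hecke operators on modular symbols in terms of the Manin generators, transferred to the function field setting via the Manin-Teitelbaum presentation (Theorem~\ref{theo-presentationSM}). First, choose a representative matrix $g_0 = \smallpmatrix{x}{y}{u}{v} \in \GL_2(A)$ for the class $(u:v)$, so that $\xi(u:v) = g_0 \cdot [0,\infty]$ in $\SM(\nn,R)$. Unwinding the definition of the Hecke operator on modular symbols,
$$T_\pp \, \xi(u:v) = \sum_{h \in S_\pp} hg_0 \cdot [0,\infty].$$
Since $\det(hg_0)$ generates $\pp$, in general $hg_0$ lies outside $\GL_2(A)$, so none of these summands is yet of the form $\xi(u':v')$; each must be decomposed further.

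For this decomposition I would invoke the function-field analog of Manin's continued-fraction trick: given $r,s \in \PP^1(K)$, the Euclidean algorithm in $A=\Fq[T]$ produces a canonical sequence $r=s_0,s_1,\ldots,s_m=s$ such that each $[s_i,s_{i+1}]$ is a unimodular Manin symbol $\xi(u_i:v_i)$. Applying this to $hg_0(0)$ and $hg_0(\infty)$ decomposes $hg_0\cdot[0,\infty]$ as a sum of Manin generators. The target identity then amounts to re-indexing the double sum (over $h\in S_\pp$ and over Euclidean steps) as a single sum over $N\in\Sigma_\pp$ acting on $(u,v)$ from the right. The key algebraic observation underlying the re-indexing is that for $N=\smallpmatrix{a}{b}{c}{d}$ the product $g_0N$ has last row $(au+cv,\,bu+dv)$, so whenever $g_0N\in\GL_2(A)$ and its last row represents a valid class in $\PP^{1}(A/\nn)$ one has $\xi((u,v)\cdot N)=g_0N\cdot[0,\infty]$ in $\SM(\nn,R)$.

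The core combinatorial statement to establish is then: as $h$ ranges over $S_\pp$ and one applies the Euclidean decomposition to each $hg_0\cdot[0,\infty]$, the resulting Manin generators are in bijection with the subset of $N\in\Sigma_\pp$ for which $(au+cv,bu+dv)$ represents a class in $\PP^{1}(A/\nn)$, each pair (\emph{step from $h$}) producing $g_0N\cdot[0,\infty]$ for a uniquely determined $N$ up to left-multiplication by some $\gamma\in\Gamma_0(\nn)$ (which acts trivially on $\SM(\nn,R)$) and up to the diagonal relations of Theorem~\ref{theo-presentationSM}. The constraints $\deg a>\deg b$ and $\deg d>\deg c$ in the definition of $\Sigma_\pp$ correspond directly to the remainder bounds in Euclidean division, while the monicity of $a,d$ records the normalization chosen at each Euclidean step. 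Elements of $\Sigma_\pp$ for which $(au+cv,bu+dv)$ fails to lie in $\PP^{1}(A/\nn)$ correspond to hypothetical steps that do not actually arise, which explains their exclusion from the sum.

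The main obstacle will be verifying this bijection explicitly: one must show that every admissible $N\in\Sigma_\pp$ arises exactly once, that the $\Gamma_0(\nn)$-ambiguities absorb correctly under the coinvariants functor defining $\SM(\nn,R)$, and that no spurious terms appear. This is the function-field analog of Merel's Heilbronn-matrix enumeration, and it is arguably more tractable in this setting because the Euclidean algorithm in $\Fq[T]$ has a unique quotient pinned down by the degree bound on the remainder, removing the ambiguity present in the classical \emph{nearest-integer} version.
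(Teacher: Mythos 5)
First, note that the paper itself contains no proof of this proposition: it is imported verbatim from \cite[Thm~6.1]{ArmBaseSM} (the function-field analogue of Merel's theorem), so there is no in-text argument to compare yours against, and your sketch must be judged on its own. It follows the expected Mazur--Merel strategy (coset representatives from $S_\pp$, Euclidean/continued-fraction decomposition into unimodular paths, re-indexing by Heilbronn-type matrices), but the ``key algebraic observation'' on which your re-indexing rests is wrong as formulated. You assert that $\xi((u,v)\cdot N)=g_0N\cdot[0,\infty]$ ``whenever $g_0N\in\GL_2(A)$''. Since $\det(g_0N)$ generates $\pp$, the matrix $g_0N$ is never in $\GL_2(A)$ unless $\pp=A$, so the condition is vacuous; and without it the identity is false, because the path $g_0N\cdot[0,\infty]$ is not unimodular and is in general a sum of several Manin generators, not the single symbol attached to its bottom row. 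The actual mechanism is that the Euclidean algorithm decomposes each $hg_0\cdot[0,\infty]$ into paths $g_i\cdot[0,\infty]$ with $g_i\in\GL_2(A)$, and the content of the theorem is that the bottom rows of these \emph{unimodular} matrices $g_i$, reduced modulo $\nn$, run exactly over the rows $(au+cv:bu+dv)$ for admissible $N\in\Sigma_\pp$, with the correct multiplicities. Conflating $g_i$ with $g_0N$ hides precisely the point that has to be proved.

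Second, that bijection between (coset representative, Euclidean step) pairs and admissible elements of $\Sigma_\pp$ is the entire content of the theorem, and you assert it rather than establish it --- indeed you flag it yourself as ``the main obstacle''. This is not a routine verification: in the classical case it is exactly the Heilbronn-matrix enumeration, which is the technical heart of Merel's argument and is not a formal consequence of the two ingredients you list. Two specific points would need real work. One is multiplicity: you must show each admissible $N$ arises from exactly one pair $(h,i)$, and that the normalizations (monic $a,d$, the degree bounds $\deg a>\deg b$, $\deg d>\deg c$) match the Euclidean quotients on the nose rather than up to the $2$-term, $3$-term and diagonal relations. The other is the dropped terms: $S_\pp$ contains only representatives with $(a)+\nn=A$, so when $\pp$ and $\nn$ are not coprime some cosets are absent, and showing that these omissions correspond exactly to the $N\in\Sigma_\pp$ for which $(au+cv:bu+dv)$ is undefined in $\PP^1(A/\nn)$ requires an argument, not the remark that such steps ``do not actually arise''. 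As written, your text is a plausible road map for the proof in \cite{ArmBaseSM}, but not a proof.
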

The set of matrices $\Sigma_\pp$ is finite. For instance if $\deg \pp=1$ and denoting by $P$ the monic generator of $\pp$, we have (see \cite[6.2]{ArmBaseSM})
\begin{equation}\label{eq-matheckedeg1}
	\Sigma_\pp = \left\{ \begin{pmatrix} P&\lambda \\ 0 & 1 \end{pmatrix}, \begin{pmatrix} 1&0 \\ \lambda & P \end{pmatrix} \grandmid \lambda \in \Fq \right\}.
\end{equation}

\begin{rema}\label{rema-TPdeg2}
	When $\deg \pp=2$, the complete list of matrices in $\Sigma_\pp$ may be found in \cite[Lemma~6.5]{ArmBaseSM}.
\end{rema}

\section{Non-zero kernel element}\label{section-nonzeroelts}

Our aim is to prove Theorem~\ref{th-main} by a computation with modular symbols. To shorten notation, we will write $\SM$ (resp. $\SM_0$) for the $\Z$-module $\SM(\nn,\Z)$ (resp. $\SM_0(\nn,\Z)$). For any modular symbol $\xi(u:v) \in \SM$, let $x(u:v)$ be its image in $\SM/p\SM \simeq \SM\otimes_{\Z} \Fp$.

\begin{prop}\label{prop-main}
Suppose that the ideal $\nn$ has no degree-$1$ factor. Let $u \in A$ be any monic polynomial of degree $1$. For any $\pp\lhd A$, let $P$ denote its monic generator in $A$. The modular symbol $x(u:1)$ is cuspidal and we have in $\SM/p\SM$:
	\begin{equation}\label{prop-calculSM-F1} \sum_{\deg \pp \leq 1 } T_\pp\, x(u:1) = \sum_{\deg \pp = 1} \sum_{\substack{v\in A_{+\leq 1} \\[.05cm] v \neq u,\, v \neq P}} x(Pu:v).
	\end{equation}
\end{prop}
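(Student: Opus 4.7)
To show that $x(u:1)$ is cuspidal I use the matrix $g = \smallpmatrix{1}{0}{u}{1} \in \GL_{2}(A)$ to write $\xi(u:1) = [g(0), g(\infty)] = [0, 1/u]$. Since $\nn$ has no degree-$1$ factor, $u$ is coprime to $\nn$, so one can choose $a \in A$ with $au \equiv 1 \pmod{\nn}$, set $c = au-1$ (a multiple of the monic generator of $\nn$), and take $\gamma = \smallpmatrix{a}{1}{c}{u}$. Then $\det \gamma = 1$, $c \equiv 0 \pmod{\nn}$, and $\gamma \cdot 0 = 1/u$, so $\xi(u:1) \in \SM_{0}$; hence $x(u:1)$ is cuspidal.

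For the main identity, write $\sum_{\deg\pp\leq 1} T_{\pp} = T_{A} + \sum_{P\in A_{+1}} T_{P}$ with $T_{A} = \id$, and apply Proposition~\ref{prop-formuleMerel} together with \eqref{eq-matheckedeg1}:
\[ T_{P}\, x(u:1) = \sum_{\lambda \in \Fq} x(Pu : \lambda u + 1) + \sum_{\lambda \in \Fq} x(u + \lambda : P). \]
All classes are automatically well-defined in $\PP^{1}(A/\nn)$ because every degree-$1$ monic is coprime to $\nn$. For $\lambda \in \Fq^{\times}$ the diagonal relation gives $x(Pu : \lambda u + 1) = x(Pu : u + \lambda^{-1})$, so with the $\lambda=0$ term one obtains $\sum_{\lambda} x(Pu : \lambda u + 1) = \sum_{v \in A_{+\leq 1},\, v \neq u} x(Pu:v)$. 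The contribution $v = P$ appears only when $P \neq u$ and equals $x(u:1)$, because $(Pu : P) = (u:1)$ in $\PP^{1}(A/\nn)$ after multiplication by $P^{-1} \bmod \nn$ (valid since $P$ is coprime to $\nn$). Summing over $P \in A_{+1}$, this first piece yields the RHS of \eqref{prop-calculSM-F1} plus $(q-1)\,x(u:1)$. Combining with $T_{A}x(u:1) = x(u:1)$ and using $q \equiv 0 \pmod{p}$, the LHS coincides with the RHS modulo $p$, up to the residual term
\[ B := \sum_{P,\, w \in A_{+1}} x(w:P) \]
coming from the second sums (the substitution $w = u+\lambda$ shows that $w$ ranges exactly over $A_{+1}$).

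The decisive step is to prove $B \equiv 0 \pmod{p}$. The two-term relation $x(w:P) = -x(-P:w)$ combined with a diagonal relation yields $x(w:P) = -x(P:w)$, so relabeling the indices of summation in $B$ gives $B = -B$ in $\SM$, hence $2B = 0$. Thus $B$ is torsion, and by Proposition~\ref{prop-smtorsion} it is annihilated by $q^{2}-1$; since $q^{2}-1 \equiv -1 \pmod{p}$ is a unit, the image of $B$ in $\SM/p\SM$ vanishes. The main obstacle is precisely this last point: in characteristic $2$ a $2$-torsion element does not automatically vanish modulo $p$, and the argument genuinely relies on the function-field-specific torsion bound from Proposition~\ref{prop-smtorsion}; the rest of the computation is a careful bookkeeping of the diagonal relations and of the ``diagonal'' term $v = P$ in the first sum.
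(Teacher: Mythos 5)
Your proof is correct, and its skeleton matches the paper's: cuspidality via an explicit $\gamma\in\Gamma_0(\nn)$ sending $0$ to $1/u$, then Proposition~\ref{prop-formuleMerel} with \eqref{eq-matheckedeg1}, then diagonal relations turning the first family of terms into $\sum_{v\in A_{+\leq 1},\,v\neq u} x(Pu:v)$ with the $v=P$ term folding back to $x(u:1)$ and contributing $(q-1)x(u:1)$ after summing over $P$. Where you genuinely diverge is in the treatment of the second family $\sum_{P}\sum_{\lambda}\xi(u+\lambda:P)$. The paper decomposes each $\xi(u+\lambda:P)$ individually via a $3$-term relation into $-\xi(P:1)+\xi(u+\lambda:1)$ (or $0$ when $u+\lambda=P$), splitting into the cases $P\neq u$ and $P=u$; its intermediate formulas in fact show that this second contribution vanishes exactly in $\SM(\nn,\Z)$, not merely modulo $p$. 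You instead keep the whole block $B=\sum_{P,w\in A_{+1}}\xi(w:P)$, observe the antisymmetry $\xi(w:P)=-\xi(P:w)$ coming from the $2$-term and diagonal relations, conclude $2B=0$, and then invoke Proposition~\ref{prop-smtorsion} (torsion annihilated by $q^2-1$, which is prime to $p$) to deduce via B\'ezout that $B\in p\,\SM(\nn,\Z)$. This is a valid and shorter route --- and you correctly flag that the torsion bound is genuinely needed when $p=2$, where $2B=0$ alone would not give $B\in 2\,\SM(\nn,\Z)$ --- at the price of importing the function-field torsion result, which the paper's proof does not use, and of obtaining only vanishing modulo $p$ rather than exact vanishing. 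Both arguments establish \eqref{prop-calculSM-F1}; yours avoids the $3$-term relations and the case split on $P=u$ entirely.
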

\begin{proof}
First let us show that $\xi(u:1) \in \SM_0$. Take $g = \left(\begin{smallmatrix}1&0\\u&1\end{smallmatrix}\right) \in \GL_2(A)$. We have $\xi(u:1) = [g(0),g(\infty)] = [0,\frac{1}{u}]$. Let $N$ denote the monic generator of $\nn$. Since $\gcd(u,N)=1$ by assumption on $\nn$, there exists a matrix $\gamma = \left(\begin{smallmatrix}\alpha &1\\\beta N&u\end{smallmatrix}\right)\in\Gamma_0(\nn)$. We have $\gamma(0) = \frac{1}{u}$ so the cusps $0$ and $\frac{1}{u}$ are $\Gamma_0(\nn)$-equivalent, which proves $[0,\frac{1}{u}] \in \SM_0$.
		
\smallskip	
Now let us prove~\eqref{prop-calculSM-F1}. Using Proposition~\ref{prop-formuleMerel} when $\deg \pp=1$ (see \eqref{eq-matheckedeg1}), we have
		\begin{equation}\label{eq-propcalculSM1} T_\pp \xi(u:1) = \sum_{\lambda \in \Fq}  \xi (Pu:\lambda u +1) + \sum_{\lambda \in \Fq} \xi (u+\lambda : P).
		\end{equation}
		\begin{enumerate}
			\item Suppose that $P\neq u$. Since $P-u$ is a non-zero element in $\Fq$, the first sum in \eqref{eq-propcalculSM1} is 
			$$\sum_{\lambda \in \Fq} \xi(Pu:\lambda u+1) = \xi(Pu:1) + \xi(Pu:\frac{P}{P-u}) + \sum_{\substack{\lambda \neq 0 \\ \lambda (P-u) \neq 1}} \xi(Pu:\lambda u+1).$$
			Moreover by assumption on $\nn$, we have $\pp+\nn=A$ so $(Pu : \frac{P}{P-u}) = (u:\frac{1}{P-u})$ in $\PP^1(A/\nn)$. Using diagonal relations, we get for all $\lambda \in \Fq\et$,
			$$ \xi(Pu : \frac{P}{P-u}) = \xi(u:\frac{1}{P-u}) = \xi(u:1) \quad\text{and}\quad \xi(Pu:\lambda u+1) = \xi(Pu:u+\lambda^{-1}).$$
			The second sum in \eqref{eq-propcalculSM1} involves $\xi(u+\lambda:P)$ which we rewrite as
			\begin{align*}
				\xi(u+\lambda : P) &= \xi(u+\lambda:-P) & \text{(diagonal rel.)} 
				\\& =-\xi(-P:-u-\lambda+P) - \xi(-u-\lambda+P:u+\lambda) & \text{($3$-term rel.)} \\
				& = -\xi(P:-u-\lambda+P)+\xi(-u-\lambda:-u-\lambda+P) & \text{(diagonal and $2$-term rel.)} \\
				& =-\xi(P:-u-\lambda+P) + \xi(u+\lambda:-u-\lambda+P) & \text{(diagonal rel.)}
			\end{align*}
		where $-u-\lambda+P\in\Fq$ since both $P$ and $u$ are monic of degree~$1$.
		
		If $-u-\lambda+P\neq 0$, using diagonal relations we get
		$$ \xi(u+\lambda :P)=-\xi(P:1)+\xi(u+\lambda:1).$$
				
		If $-u-\lambda+P=0$, since each of the degree-$1$ polynomials $P$ and $u+\lambda$ are coprime to $\nn$, we obtain
		$$ \xi(u+\lambda:P) = -\xi(P:0)+\xi(u+\lambda:0)=-\xi(1:0)+\xi(1:0)=0.$$
		Finally when $P\neq u$, we have
		\begin{align*} T_\pp \xi(u:1) &= \xi(Pu:1) + \xi(u:1) + \sum_{\substack{\lambda \neq 0 \\ \lambda (P-u)\neq 1}} \xi(Pu:u+\lambda^{-1}) + \sum_{\lambda \neq P-u} ( - \xi(P:1) + \xi(u+\lambda:1)) \\
			& = \xi(Pu:1) + \xi(u:1) -(q-1) \xi(P:1) +\sum_{\substack{\lambda \neq 0 \\ \lambda (P-u)\neq 1}}   \xi(Pu:u+\lambda^{-1}) + \sum_{\lambda \neq P-u} \xi(u+\lambda:1).
			\end{align*}
		Modulo $p$, we obtain
\begin{multline}\label{eq-calculSM1}
	T_\pp x(u:1) = x(Pu:1) + x(u:1) + \sum_{\substack{\lambda \neq 0 \\ \lambda (P-u)\neq 1}}  x(Pu:u+\lambda^{-1}) + \sum_{\lambda\in\Fq} x(u+\lambda:1).
\end{multline}
	\item Suppose that $P=u$. Similarly we have from \eqref{eq-propcalculSM1}
	$$T_\pp \xi(u:1) = T_u \xi(u:1) = \sum_{\lambda \in \Fq}  \xi(u^2 : \lambda u+1) + \sum_{\lambda \in \Fq} \xi(u+\lambda : u).$$
	When $\lambda\neq 0$, a diagonal relation gives $\xi(u^2 : \lambda u+1) = \xi(u^2 : u+\lambda^{-1})$. Moreover for all $\lambda \in \Fq$, we have
	\begin{align*}
		\xi(u+\lambda:u) & = \xi(u+\lambda :-u) & \text{(diagonal rel.)} \\
		& = -\xi(-u:-\lambda) - \xi(-\lambda:u+\lambda) & \text{($3$-term rel.)}\\
		& = -\xi(u:\lambda) + \xi(-u-\lambda:-\lambda) & \text{(diagonal and $2$-term rel.)} \\
		&= -\xi(u:\lambda) + \xi(u+\lambda:\lambda) & \text{(diagonal rel).}
\end{align*}	
For $\lambda\neq 0$, by diagonal relations we obtain $\xi(u+\lambda : u) = -\xi(u:1) + \xi(u+\lambda:1)$ and for $\lambda=0$, we have $\xi(u+\lambda:u) = -\xi(u:0) + \xi(u:0)=0$.

Finally when $P=u$, we obtain
\begin{align*} T_\pp \xi(u:1)&= \xi(u^2:1)+\sum_{\lambda\in\Fq\et} \xi(u^2 : u+\lambda^{-1}) + \sum_{\lambda \in \Fq\et} (-\xi(u:1)+\xi(u+\lambda:1)) \\
	& = \xi(u^2:1) -(q-1) \xi(u:1) + \sum_{\lambda\in\Fq\et} \xi(u^2 : u+\lambda) + \sum_{\lambda\in\Fq\et} \xi(u+\lambda:1).
\end{align*}
	Modulo $p$, we get
	\begin{equation}\label{eq-calculSM2}
		T_\pp\, x(u:1) = x(u^2:1) + \sum_{\lambda\in\Fq\et}  x(u^2:u+\lambda) +\sum_{\lambda\in\Fq} x(u+\lambda:1).
	\end{equation}
		\end{enumerate}
	Finally we combine \eqref{eq-calculSM1} and \eqref{eq-calculSM2} to compute $$\sum_{\deg \pp \leq 1} T_\pp x(u:1) = x(u:1) + \sum_{\deg \pp=1} T_\pp x(u:1).$$ The term $\sum_{\lambda \in \Fq} x(u+\lambda:1)$ appears $q$ times, hence vanishes. In the remaining expression, the term 
	$x(u:1)$ also appears $q$ times, hence vanishes.
Finally we get 
	$$ \sum_{\deg \pp \leq 1} T_\pp x(u:1) = \sum_{\deg \pp=1} x(Pu:1) + \sum_{\deg \pp=1} \sum_{\substack{\lambda \neq 0 \\ \lambda (P-u)\neq 1}} x(Pu:u+\lambda^{-1}).$$
	Since $\{ u+\lambda^{-1} \mid \lambda\neq 0, \lambda(P-u)\neq 1 \}$ coincides with the set of monic polynomials of degree~$1$ which are distinct from $u$ and $P$, we get the result.
\end{proof}

The following statement proves Conjecture~\ref{conj-69} (Conjecture~6.9 in \cite{ArmCoeffDMF}).
\begin{theo}\label{th-main}
Suppose that the ideal $\nn$ is prime of degree~$\geq 5$.
The element
$$ \sum_{\deg \pp \leq 1} T_\pp $$
is non-zero in $\TT(\nn,\Fp)$. 
\end{theo}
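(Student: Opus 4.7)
The plan is to combine Proposition \ref{prop-main} with the linear independence statement of Theorem \ref{theo-famindepSM}, and then transfer non-vanishing of the Hecke action from cuspidal modular symbols to Drinfeld modular forms via the Hecke-compatible isomorphism of Lemma \ref{lemm-isomcarp}. The strategy is: exhibit a single element $x(u:1)$ of $\SM_0 \otimes_\Z \Fp$ on which $\sum_{\deg\pp\leq 1} T_\pp$ acts nontrivially.

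First I would fix any monic polynomial $u \in A$ of degree $1$ (for instance $u = T$). Since $\nn$ is prime of degree at least $5$, it has no degree-$1$ factor, so the hypotheses of Proposition \ref{prop-main} are satisfied and we obtain in $\SM/p\SM$ the identity
$$\sum_{\deg \pp \leq 1} T_\pp\, x(u:1) \;=\; \sum_{\deg P = 1}\ \sum_{\substack{v \in A_{+\leq 1}\\ v \neq u,\ v \neq P}} x(Pu:v),$$
together with the fact that $x(u:1)$ is cuspidal.

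Second, I would show that the right-hand side is non-zero in $\SM(\nn,\Fp)$ by invoking Theorem \ref{theo-famindepSM}. Every summand $x(Pu:v)$ corresponds to a pair $(Pu, v) \in A_+ \times A_+$ with $\deg v \leq 1 < 2 = \deg(Pu)$ and $\gcd(Pu, v) = 1$: indeed $v = 1$ is trivially coprime to $Pu$, while a monic $v$ of degree $1$ is irreducible and the conditions $v \neq u$, $v \neq P$ mean exactly that $v \nmid Pu$. The key inequality $\deg(Pu) = 2 < \deg(\nn)/2$ holds precisely because $\deg \nn \geq 5$. Thus every term $x(Pu:v)$ lies in the mod-$p$ reduction of the linearly independent family from Theorem \ref{theo-famindepSM}, and the indexing pairs $(Pu,v)$ are pairwise distinct in $A_+ \times A_+$. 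Each such term appears with coefficient $1 \in \Fp$, so the right-hand side is a non-trivial linear combination of distinct basis vectors, hence non-zero in $\SM \otimes_\Z \Fp$.

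Third, I would descend this to $\SM_0(\nn,\Z) \otimes_\Z \Fp$. Since $B(\nn)$ is a free $\Z$-module, the short exact sequence $0 \to \SM_0 \to \SM \to B(\nn) \to 0$ remains exact after tensoring with $\Fp$, so $\SM_0 \otimes \Fp \hookrightarrow \SM \otimes \Fp$. By the final sentence of Theorem \ref{theo-famindepSM}, every term $x(Pu:v)$ appearing is cuspidal, so the displayed identity and its non-vanishing take place inside $\SM_0 \otimes \Fp$. Applying the Hecke-equivariant isomorphism $\SM_0(\nn,\Z) \otimes_\Z \Fp \xrightarrow{\sim} M_{2,1}^{0,0}(\nn,\Fp)$ of Lemma \ref{lemm-isomcarp} shows that $\sum_{\deg\pp\leq 1} T_\pp$ acts non-trivially on $M_{2,1}^{0,0}(\nn,\Fp)$, and therefore is non-zero in $\TT(\nn,\Fp)$. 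The actual difficulty — namely, the intricate reduction of $\sum_{\deg\pp\leq1}T_\pp\,\xi(u{:}1)$ into Manin--Teitelbaum generators using the $2$-term, $3$-term, and diagonal relations — is already absorbed into Proposition~\ref{prop-main}; what remains in this proof is the verification that its output lies in the explicit independent range afforded by $\deg\nn\geq5$.
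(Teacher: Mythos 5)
Your proposal is correct and follows essentially the same route as the paper: transfer the problem to $\SM_0/p\SM_0$ via Lemma~\ref{lemm-isomcarp}, evaluate $\sum_{\deg\pp\leq1}T_\pp$ on $x(u:1)$ using Proposition~\ref{prop-main}, and conclude non-vanishing because the output is a sum of distinct members of the linearly independent family of Theorem~\ref{theo-famindepSM} (the bound $\deg(Pu)=2<\deg(\nn)/2$ being exactly where $\deg\nn\geq5$ enters). Your extra verifications (coprimality of $Pu$ and $v$, pairwise distinctness of the indexing pairs, and the flatness argument for $\SM_0\otimes\Fp\hookrightarrow\SM\otimes\Fp$) are all sound and merely make explicit what the paper leaves to the reader.
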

\begin{proof}
By Lemma~\ref{lemm-isomcarp}, it is equivalent to show that $ \sum_{\deg \pp \leq 1} T_\pp $ is non-zero in $\End_{\Fp}( \SM_{0}/p\SM_{0})$. Let $$ S = \{ (u:v) \in \PP^{1}(A/\nn) \mid (u,v)\in A_+\times A_+,\,\deg v < \deg u \leq 2,\, \gcd(u,v)=1 \}.$$
Assume that $\nn$ is prime of degree~$\geq 5$. By Theorem~\ref{theo-famindepSM}, the family $(x(u:v))_{(u:v)\in S}$ is linearly independent in $\SM(\Fp)(=\SM/p\SM)$ therefore in $\SM_0/p\SM_0$. 
In Proposition~\ref{prop-main}, observe that the right-hand side of \eqref{prop-calculSM-F1} is a linear combination of elements of this family, with at least one non-zero coefficient. Therefore $ \sum_{\deg \pp \leq 1} T_\pp $ is non-zero in $\End_{\Fp}( \SM_{0}/p\SM_{0})$.
\end{proof}

\begin{rema}
	When $\nn$ is prime of degree~$3$, the element $\sum_{\deg \pp \leq 1} T_\pp$ is zero in $\TT(\nn,\Fp)$ because of Proposition~\ref{prop-pairingDMF-casparfait}. The case $\deg \nn=4$ will be discussed in the next section.	
\end{rema}

\begin{coro}\label{coro-main}
Suppose that the ideal $\nn$ is prime of degree~$\geq 5$. The pairings $\pairing_{\DMF}$, $\pairing_{\HC,\Fp}$ and $\pairing_{\HC,\Z}$ are not perfect over $\Cinf$, $\Fp$ and $\Z$ respectively.
\end{coro}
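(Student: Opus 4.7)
The plan is to deduce all three non-perfectness statements from Theorem~\ref{th-main} by producing, in each case, the same explicit element of the Hecke algebra in the left-kernel and verifying it is non-zero. The natural candidate is $\theta := \sum_{\deg \pp \leq 1} T_\pp$. By Corollary~\ref{coro-coeffDMF}, $\theta$ lies in the left-kernel of $\pairing_{\DMF}$, and by Proposition~\ref{prop-eltnoyaucochaine}(1) with $d=1$, $\theta$ lies in the left-kernel of $\pairing_{\HC,\Fp}$. Theorem~\ref{th-main} supplies the decisive input that $\theta$ is non-zero in $\TT(\nn,\Fp)$, so the strategy reduces to bookkeeping about coefficient rings.

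For $\pairing_{\HC,\Fp}$, the argument is then immediate: a non-zero element in the left-kernel prevents perfectness. For $\pairing_{\HC,\Z}$, I would leverage that $\TT(\nn,\Z)$ and $\Harm_0(\nn,\Z)$ are free $\Z$-modules of the same rank $g(\nn)$ (Lemma~\ref{lem-rankHeckeZ} and Section~\ref{sssection-hcoch}), so perfectness over $\Z$ is equivalent to the Gram determinant being a unit. Under the identifications of Corollary~\ref{cor-isomalghecke} together with $\Harm_{0,0}(\nn,\Fp) \simeq \Harm_0(\nn,\Z)\otimes_\Z \Fp$, the reduction modulo $p$ of $\pairing_{\HC,\Z}$ is exactly $\pairing_{\HC,\Fp}$; hence perfectness over $\Z$ would imply perfectness over $\Fp$, which we have just ruled out. (Equivalently, the Gram determinant of $\pairing_{\HC,\Z}$ is divisible by $p$.)

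For $\pairing_{\DMF}$, the only point that requires a moment's care is that $\theta$ remains non-zero in $\TT(\nn,\Cinf) \simeq \TT(\nn,\Z)\otimes_\Z \Cinf$. Since $\TT(\nn,\Z)$ is a free $\Z$-module, the reduction map onto $\TT(\nn,\Fp) = \TT(\nn,\Z)\otimes_\Z \Fp$ has kernel $p\,\TT(\nn,\Z)$, so $\theta \ne 0$ in $\TT(\nn,\Fp)$ forces $\theta \ne 0$ in $\TT(\nn,\Z)$, and freeness gives $\theta\otimes 1 \ne 0$ in $\TT(\nn,\Cinf)$. Combined with Corollary~\ref{coro-coeffDMF}, this exhibits a non-zero element in the left-kernel of $\pairing_{\DMF}$, ruling out perfectness over $\Cinf$. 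The only genuine obstacle in the whole argument is Theorem~\ref{th-main} itself; once that is available, the corollary is essentially the observation that non-vanishing in the Hecke algebra modulo $p$ propagates to both $\Z$ and $\Cinf$ via the free-module structure.
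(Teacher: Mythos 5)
Your proposal is correct and follows essentially the same route as the paper: the kernel element $\sum_{\deg\pp\leq 1}T_\pp$ from Corollary~\ref{coro-coeffDMF} and Proposition~\ref{prop-eltnoyaucochaine}, its non-vanishing in $\TT(\nn,\Fp)$ from Theorem~\ref{th-main} propagated to $\TT(\nn,\Cinf)$ via the canonical identifications, and a reduction-mod-$p$ contradiction for $\pairing_{\HC,\Z}$. Your explicit bookkeeping through the free $\Z$-module $\TT(\nn,\Z)$ is just a slightly more detailed version of the paper's one-line assertion that non-vanishing over $\Fp$ implies non-vanishing over $\Cinf$.
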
	
\begin{proof}
Corollary~\ref{coro-coeffDMF} and Proposition~\ref{prop-eltnoyaucochaine} ensure that $\sum_{\deg \pp \leq 1} T_\pp$ is in the left-kernel of $\pairing_{\DMF}$ and $\pairing_{\HC,\Fp}$ respectively. Under the assumption on $\nn$, Theorem~\ref{th-main} proves that this element is non-zero in $\End_{\Fp}(M_{2,1}^{0,0}(\nn,\Fp))$, therefore in $\End_{\Cinf}(M_{2,1}^{0,0}(\nn))$. This proves the assertion on $\pairing_{\DMF}$ and $\pairing_{\HC,\Fp}$.
\par If the pairing $\pairing_{\HC,\Z}$ were perfect over $\Z$, it would give $\TT(\nn,\Z) \simeq \Hom(\Harm_{0}(\nn,\Z),\Z)$ and, by reduction modulo~$p$, an isomorphism $\TT(\nn,\Fp) \simeq \Hom(\Harm_{0,0}(\nn,\Fp),\Fp)$ induced by $\pairing_{\HC,\Fp}$. This would contradict Theorem~\ref{th-main}.
\end{proof}

\section{Experimental data and open questions}\label{sec-expdata}

To explore the situation beyond Theorem~\ref{th-main} and in view of Corollary~\ref{coro-coeffDMF} and Proposition~\ref{prop-eltnoyaucochaine}, we have computed the elements $$\somme_{\leq 1} = \sum_{\deg \pp \leq 1} T_\pp \text{ and } \somme_d = \sum_{\deg \pp = d} T_\pp$$ in the Hecke algebra $\TT(\nn,\Fp)$. Computations were done using SageMath \cite{sagemath}. For simplicity, our algorithms have been written when $q$ is a prime number and $\nn$ a prime ideal, hence the assumptions in the following claim.
\begin{claim}\label{claim-numerical}Let $q$ be a prime number and $\nn$ a prime ideal.
\begin{enumerate}
	\item\label{item-claim1} If $\deg \nn=4$, we have $\somme_{\leq 1} = 0 $ and $\somme_{2}=0$.
	\item\label{item-claim2} If $\deg \nn\geq 5$, we have $\somme_2\neq 0$. Moreover:
	\begin{enumerate}
		\item If $\deg \nn \in \{5,6\}$, we have $\somme_2 = -\somme_{\leq 1}$.
		\item If $\deg \nn \geq 7$, the elements $\somme_{\leq 1}$ and $\somme_{2}$ are $\Fp$-linearly independent.
	\end{enumerate}
\item\label{item-claim3} If $\deg \nn \geq 5$ and $d\geq 3$, we have $\somme_d=0$.
\end{enumerate}
\end{claim}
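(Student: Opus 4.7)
The approach is computational verification, mirroring the strategy used in the proof of Theorem~\ref{th-main}. By Lemma~\ref{lemm-isomcarp}, the Hecke algebra $\TT(\nn,\Fp)$ is identified with its image in $\End_{\Fp}(\SM_0(\nn,\Z) \otimes_{\Z} \Fp)$, so deciding whether $\somme_{\leq 1}$ or $\somme_d$ vanishes (respectively is non-zero) amounts to checking its action on a generating family of cuspidal modular symbols modulo~$p$, and the Manin--Teitelbaum presentation makes this a finite linear-algebra problem over $\Fp$.

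Concretely, the plan is: first, use Teitelbaum's presentation (Theorem~\ref{theo-presentationSM}) to realize $\SM(\nn,\Fp)$ as the quotient of $\Fp[\PP^{1}(A/\nn)]$ by the two-term, three-term and diagonal relations, and extract $\SM_0(\nn,\Fp)$ as the kernel of the boundary map to $B(\nn,\Fp)$; second, for each monic prime $P$ with $\deg P = d$, enumerate the matrix set $\Sigma_{(P)}$ (given explicitly by~\eqref{eq-matheckedeg1} for $d=1$, by Lemma~6.5 of \cite{ArmBaseSM} for $d=2$, and by an analogous bounded-entry enumeration for larger $d$) and apply the Merel-type formula of Proposition~\ref{prop-formuleMerel} to every generator $\xi(u:v)$; third, sum the resulting endomorphisms to form $\somme_{\leq 1}$ and $\somme_d$. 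Vanishing of these endomorphisms in $\End_{\Fp}(\SM_0(\nn,\Fp))$ is then decided by a rank test on the associated matrices.

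For the non-vanishing statements in~\ref{item-claim2}, one can make the check unconditional in the same way as in the proof of Theorem~\ref{th-main}: evaluate on a symbol $x(u:v)$ with $(u,v)\in A_+ \times A_+$, $\deg v < \deg u < \deg(\nn)/2$, $\gcd(u,v)=1$, and exhibit a non-trivial coefficient against the linearly independent family of Theorem~\ref{theo-famindepSM}. The $\Fp$-linear independence of $\somme_{\leq 1}$ and $\somme_2$ in part~\ref{item-claim2}(b) then reduces to verifying that a suitable $2\times r$ coefficient matrix has rank~$2$, while the identity $\somme_2 = -\somme_{\leq 1}$ in part~\ref{item-claim2}(a) requires comparing the two sides on every generator modulo~$p$.

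The main obstacle is that a fully theoretical proof, extending the combinatorial argument of Proposition~\ref{prop-main}, becomes unwieldy as the degree of $\pp$ grows: the set $\Sigma_\pp$ has only $2q$ matrices when $\deg \pp = 1$, but grows much faster in higher degree (see Remark~\ref{rema-TPdeg2}), and the cascade of two-term, three-term and diagonal rewrites needed to reduce each summand $\xi(au+cv:bu+dv)$ to a canonical generator is not easy to control in closed form. For this reason, the statements $\somme_d = 0$ when $d\geq 3$, or when $(d,\deg \nn) = (2,4)$, which would in principle need to hold for an infinite family of levels, can only be checked within the computational range accessible to SageMath, and the result is therefore stated as a Claim rather than a Theorem.
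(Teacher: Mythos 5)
The paper offers no proof of this statement: it is deliberately labelled a Claim and is justified only by the sentence that it ``is supported by computations on many examples for $q\in\{2,3,5\}$ and $\deg\nn\in\llbracket 4,8\rrbracket$ with $d\in\llbracket 1,5\rrbracket$'' carried out in SageMath, which is exactly the finite computational verification via the Manin--Teitelbaum presentation and the Merel-type formula that you describe. Your proposal therefore takes essentially the same approach as the paper, and you correctly identify why it cannot be upgraded to a theorem for all levels.
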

This claim is supported by computations on many examples for $q\in \{2,3,5\}$ and $\deg \nn \in \llbracket 4,8  \rrbracket$ with $d\in \llbracket 1,5\rrbracket$. 

The first claim when $\deg \nn=4$ was also stated in \cite[Question~6.8]{ArmCoeffDMF} and it suggests that Theorem~\ref{th-main} cannot be extended to this case. Moreover recall that $\somme_{d}=0$ for any $d\geq 3$ by an argument on the support of cuspidal harmonic cochains, see \cite[Lemma~6.7]{ArmCoeffDMF}. Combined with the first claim, it suggests that none of the kernel elements $(\somme_d)_{d\geq 2}$ from Proposition~\ref{prop-eltnoyaucochaine} are non-zero when $\nn$ is prime of degree~$4$. Thus we have found no experimental obstruction to the perfectness of the pairings $\pairing_{\DMF}$ and $\pairing_{\HC,\Fp}$ in this case.

When $\deg\nn \geq 7$, Claim~\ref{claim-numerical} suggests that the dimension of the left-kernel of $\pairing_{\HC,\Fp}$ could be at least $2$.

\smallskip
In view of the pairing $\pairing_{\DMF}$, we also have computed examples of kernel elements with polynomial coefficients. We have focused on
$$ \somme_{2,A}= \sum_{P \in A_{+2}} P T_P \quad \in \TT(\nn,\Cinf)$$
which belongs to the left-kernel of $\pairing_{\DMF}$ when $q\geq 3$ by Corollary~\ref{coro-coeffDMF}. Our computations suggest that for any prime $\nn$ of degree $\geq 5$, this element is non-zero and that $(\somme_{2,A},\somme_{\leq 1},\somme_2)$ are linearly independent over $\Fp$. In this setting, the left-kernel of $\pairing_{\DMF}$ could be of dimension at least $3$. 

\smallskip
Many questions about these pairings remain open. We mention a couple of them. 
\begin{itemize}
	\item When $\nn$ is prime of degree~$4$, are the pairings $\pairing_{\HC,\Fp}$, $\pairing_{\HC,\Z}$ and $\pairing_{\DMF}$ perfect?
	\item When $\nn$ is prime of degree $\geq 5$:
	\begin{itemize}\item Can we have a more precise description of the left-kernels of $\pairing_{\DMF}$ and $\pairing_{\HC,\Fp}$: generating families, dimensions? 
	\item  When $\nn$ is prime of degree $\geq 5$, the determinant of $\pairing_{\HC,\Z}$, with respect to any $\Z$-bases of $\TT(\nn,\Z)$ and $\Harm_{0}(\nn,\Z)$, is a positive power of~$p$, see Theorem~\ref{th-main-intro}. This power measures how far is the pairing from being perfect. It would be interesting to compute this exponent on examples and to understand if it is connected to other arithmetical or geometrical quantities attached to harmonic cochains or Drinfeld modular curves.
	\end{itemize}
	\item What can be proved about these pairings when the level $\nn$ is not a prime?
	\end{itemize}

\bibliographystyle{alpha}
\bibliography{biblio}
   
\end{document}